\newcommand{\D}{\mathcal{D}}
\title{Convolutional codes  
 from unit schemes.}
\author{Ted
 Hurley\footnote{National Universiy of Ireland Galway, email:
 Ted.Hurley@NuiGalway.ie}}
\date{} 
\begin{document}
\maketitle
\begin{abstract}\let\thefootnote\relax\footnote{\noindent Keywords: Code,
Convolutional, Unit, Scheme, Decoding. 

MSC Classification: 94B10, 11T71, 16S99}
Convolutional codes are constructed, designed and analysed using row and/or block structures of unit algebraic schemes. 
Infinite series of such codes and
of codes with specific properties are derived.  
Properties are  shown  
 algebraically and algebraic decoding methods are derived. 
For a given rate and given error-correction 
 capability at each component, convolutional codes with these
 specifications and with efficient decoding algorithms are 
 constructed. Explicit prototype examples are given but in general large lengths and large error capability are achievable. 
Convolutional codes with efficient decoding algorithms at or near the maximum free distances attainable for the parameters 
are  constructible. Unit memory convolutional
 codes of maximum possible free distance are designed with practical 
 algebraic decoding algorithms.  

LDPC (low
density parity check) convolutional codes with efficient  decoding
 schemes are constructed and analysed by the methods.  
Self-dual and dual-containing convolutional codes may also be designed by the methods; dual-containing codes enables the construction of quantum codes.  
\end{abstract}

\section{Introduction}
Convolutional codes are error-correcting codes which 
are used extensively in many  applications 
including digital video,
radio, mobile communication, and satellite/space communications. 

  Background on  convolutional codes  
  may be found in \cite{blahut}, 
  \cite{joh}, \cite{mceliece},  \cite{mac}   
 and many others.  
The oft quoted theory by McEliece, \cite{mac}, 
points out  
the lack of algebraic methods for the  construction of convolutional
codes and notes that the existing ones
had been found by search methods which severely  limit their size and
availability.  
Here algebraic formulations using row and/or blocks of 
unit schemes are developed.  
A major advantage is that the control matrix and 
the finite right inverse, when it exists, of the generator matrix 
can be obtained directly from the construction.  The form of the control matrix 
leads to implementable  error-correcting algorithms. 
Efficient decoding techniques may
thus be derived. When the errors have been discovered and eliminated the right inverse may then be used to recover the required information vector. 
 Explicit prototype examples are given; in general
large lengths and large error capability are achievable. 


 


Using the techniques of \cite{hurleyunit} and \cite{hurley}, convolutional codes with
maximum distance and other required properties may be constructed. For a given
rate and required error-correction at each component, convolutional
codes with these specifications are constructed and these have 
 algebraic efficient decoding algorithms. This gives types of Shannon 
achieving convolutional codes.  

Using particular types unit schemes enables convolutional codes to be constructed with specific properties. 
Using schemes where one of the units has low density leads to   the
construction of LDPC (low density parity check) convolutional codes,
Section \ref{ldpc}. 
Specialising to 
orthogonal schemes leads to the construction of series of self-dual and
dual-containing convolutional codes, Section \ref{selfdual}. 
 Dual-containing  (including  self-dual) 
codes lead to  the construction of quantum codes,
\cite{calderbank}; see also \cite{quantum}. 

 
Different  {\em equivalent}   
 definitions of  convolutional codes are given in the literature and
 these are analysed very nicely and comprehensively in 
\cite{rosenthal1}. Algebraic definitions
 and formulations are of interest here.   

A rate $\frac{k}{n}$ convolutional code with parameters $(n,k,\de)$ over a field $\F$ is 
 a submodule of $\F[z]^n$ generated by a reduced
basic matrix $G[z] =(g_{ij}) \in \F[z]^{r\ti n}$ of rank $r$,  where $n$ is
the {\em length},  $\de = \sum_{i=1}^k \de_i$ is the {\em degree}
with  $\de_i= \max_{1\leq j\leq k}{\deg g_{ij}}$. The  
$\mu=\max_{1\leq i\leq r}{\de_i}$ is known as the {\em memory} of the
code and the code may then be given with parameters $(n,k,\de;\mu)$. 
The parameters $(n, r,\delta;\mu, d_f)$ are  used for such a code 
with free (minimum) distance $d_f$.

A convolutional code may equivalently be described as
follows. A 
convolutional code $\C$ 
of length $n$ and dimension $k$ is a direct
summand of $\F[z]^n$ of rank $k$. See for example \cite{heine} and
\cite{rosenthal1}. Here $\F[z]$ is the polynomial
ring over $\F$ and  $\F[z]^n = \{(v_1, v_2, \ldots,
v_n) : v_i \in \F[z]\}$. 

 
Suppose $\C$ is a convolutional code in $\F[z]^n$ of rank $k$. A generating matrix $G[z] \in
\F[z]_{k\times n}$ of $\C$ having rank $k$ is called a
{\em generator} or {\em encoder matrix}  of $\C$. 
A matrix $H \in \F[z]_{n\times(n-k)}$ satisfying $\C = \ker H =
\{v \in \F[z]^n : vH = 0 \}$ is said to be a {\em control matrix} or
    {\em check matrix} of the code $\C$. 


For a given generator matrix 
$G[z] \in \F[z]_{r\ti n}$ of a convolutional code,  a {\em codeword} is of
the form  $u(z)G[z]$ with  $u(z) \in \F[z]^r$. 
Here   $u(z)$ is  {\em the information vector } of  the
codeword $u(z)G[z]$. 
Now  $u(z) = \sum_i\al_iz^i$ for vectors $\al_i\in \F^r$; the $\al_i$ are 
  the {\em components} of
$u(z)$. The {\em support} of $u(z)$ is the
number of non-zero (vector) components of $u(z)$. What is required is
the recovery of the information vector when it has been encoded,  by
$u(z) \mapsto u(z)G[z]]$,  and then  
transmitted over a `noisy' channel.


The papers  \cite{heine1}, \cite{rosenthal} introduce  
certain algebraic decoding techniques  for convolutional
codes; the decoding techniques here are different. Vetterbi or
sequential decoding are available for convolutional codes, see
\cite{blahut}, \cite{joh} or \cite{mceliece} and references therein. 


\quad 

 The maximum distance attainable by an $(n,r)$
linear code is $(n-r+1)$ and this is known as the {\em Singleton bound}, 
 see \cite{blahut} or \cite{mac}. A linear code
$(n,r)$ attaining the maximum distance possible  
is called an {\em mds} (maximum
distance separable) code. 
By Rosenthal and Smarandache, \cite{ros},
 the maximum free distance attainable by an
$(n,r,\delta)$ code is  $(n-r)(\floor{\de/r}+1)+
\de +1$. The case  $\delta
=0$, which is the case of zero memory, corresponds to the linear 
Singleton bound $(n-r+1)$. 
The bound $(n-r)(\floor{\de/r}+1)+
\de +1$  
is then called  the {\em generalised Singleton  bound}, \cite{ros}, GSB,
and a convolutional code attaining this bound is known as  an {\em mds
convolutional code}.

In \cite{ros}, Rosenthal and Smarandache give a   a non-constructive
proof of the existence of
$(n,r,\de)$ mds convolutional  codes 
and then in
\cite{smar} explicit examples over suitably large fields are
given. The papers \cite{ros} and \cite{smar} 
are fundamental and major contributions to the area.   

The paper \cite{napp} of Napp and Smarandache  relates MDP  
(maximum distance profile) convolutional codes and constructs such
codes. In addition \cite{chan} discusses  methods for constructing unit
memory mds convolutional codes.

Here memory 1 GSB convolutional codes are designed with efficient
 decoding algorithms. For a given rate and a given $t$ these can be designed with this rate and with the property that $t$ errors may be corrected at
 each component of an error vector.

Many of the constructions in the literature are special cases of the
general construction here; those in \cite{blahut} and
\cite{mceliece,mac} are obtained as  special small cases of the general
constructions here.  



\subsection{Material Organisation}
Section \ref{rows} describes the general methods and proves related
results.  Prototype initial constructions are given in Section 
\ref{examples1}; this section could be consulted to see the range and 
 types of constructions possible. 
  Algebraic algorithmic decoding methods  
are derived  in Section \ref{decoding}.  
Section \ref{cheb1} discusses convolutional codes obtained from unit
matrices where rows of the matrix taken in succession or in certain
arithmetic sequences form mds linear codes; see \cite{hurleyunit} for
the linear analogue of these.   
 Prototype examples from these unit schemes are given in 
Section \ref{design}. 

The constructions in
Sections \ref{examples1} and \ref{design}   
 can also serve as prototypes for longer length schemes which are 
available from the general methods. 



Section 
\ref{blocks} generalises the row structure design method to a 
block structure design method. 
Section  \ref{ldpc} describes methods for designing and analysing    
LDPC (low density parity check) 
convolutional codes. Section \ref{selfdual} deals
with the design and properties of   
 self-dual and dual-containing convolutional codes. Self-dual and
 dual-containing codes have been used for constructing quantum codes.  

Some of the block cases in Section \ref{blocks}  
overlap some of those in \cite{jessica}.  


\subsection{Algebraic background}\label{Set-up}
Most of the required background may be found in \cite{blahut}
and further background is to be found in \cite{seh}.
 
 Methods are given in  \cite{hur1} and  \cite{hur0} for constructing 
unit-derived (linear) codes;  further details including the required algebra may be found 
 in expanded book chapter form at \cite{hur2}.  The unit-derived method
 may be described briefly  as follows.
Let $R_{n\ti n}$ denote the ring of
 $n\ti n$ matrices with entries from $R$, a ring with identity. 
Suppose  $UV=1$ in $R_{n\ti n}$.   
Taking any $r$ rows of $U$
as a generator matrix defines an $(n,r)$ code  and the check
matrix is obtained by deleting the corresponding columns of $V$.
Let such a code be denoted by $\mathcal{C}_r$.
 When $R$ is a
field and $U$ is a type of Vandermonde or Fourier matrix, then taking
consecutive rows of $U$ or rows of $U$ in arithmetic sequence, with arithmetic difference $k$ satisfying $\gcd(n,k)=1$, results in
mds linear codes, see \cite{hurleyunit}. When $R$ is a field and  the matrix $V$ has the property that the
determinant of any square 
submatrix of $V$ is non-zero then any such code $\mathcal{C}_r$ is an mds
$(n,r,n-r+1)$ linear code, see \cite{hurley} for details. These methods
 are extended here to obtain good performing convolutional codes

 Convolutional codes have been constructed in \cite{hur10} using $UV=1$ in
 which $\{U,V\}$ are  themselves { polynomial  or  group rings}; further
 details on these may be obtained in book chapter form in  
 \cite{hur11}. 

Note: The expression $\un{0}$ is used to denote a vector or matrix consisting of
all zeros; so as to avoid complicating the notation, 
 the  size 
of $\un{0}$ is determined from  the context in which it is used.

\section{Convolutional codes from unit schemes: general construction}\label{rows}
 
A general method is described for constructing convolutional codes from
unit schemes.  

Suppose that 
$UV=1$ in $\F_{n\ti n}$. Denote the rows of $U$ by
$\{e_0,e_1, \ldots , e_{n-1}\}$ and the columns of $V$ by $\{f_0,f_1,
\ldots, f_{n-1}\}$. Then $e_if_j = \delta_{ij}$, the Kronecker
delta. 

Now form $r\ti n$ matrices $E_i$ for $i=0, \ldots, s$ where each $E_i$ consists
of choosing $k$ rows of $U$ for $1\leq k \leq r$ together with
$r-k$ rows consisting of the 
 $1\ti n$ zero vector; it is also assumed that $E_i\neq E_j$ for $i \neq j$. 
Define 
\begin{equation}\label{eq:geneq}
G[z]= E_0+E_1z+\ldots + E_sz^s .  
\end{equation}



Then  $G[z]$ is  a generating matrix for a convolutional
code. 

Many existing convolutional codes are equivalent to codes formed in this way.
Not every choice of the  
 $E_i$ will be suitable or useful. 
It is desirable to choose the $E_i$ so that $G[z]$ is {\em
noncatastrophic}.  Also $G[z]$ is required to have rank $r$. 

For the definition of 
{\em noncatastrophic} see for example \cite{blahut} or \cite{mac}. Here it is sufficient to note, see \cite{mac}, that $G[z]$ is
noncatastrophic if it has a polynomial right inverse, that is, if there exists a
polynomial $n\ti r$ matrix $H[z]$ of rank $r$ such that
$G[z]H[z]=I_r$, the identity $r\ti r$ matrix. Thus it is desirable 
 but not necessary that the rows used in $E_0$ be linearly independent.

For a  given $n\ti n$ system $UV=1$ and a given $r < n$ the method 
allows the construction of codes of rate  $\frac{r}{n}$.  
A big advantage of this scheme is that the control matrix and the finite right
inverse, when it exists,  of this generator matrix may directly be
constructed from the columns of the inverse $V$ of $U$; this can lead to decoding algorithms.  


Suppose $U= \begin{pmatrix} e_0 \\ e_1 \\ \vdots \\ e_{n-1} \end{pmatrix}$
and $V=(f_0, f_1, \ldots, f_{n-1})$ are the row and column structures of
$U,V$ respectively. Then $e_if_j = \de_{ij}$. Each $E_i$  is made up  of rows of $U$ and say $e_j\in E_i$ if $E_i$
contains the row $e_j$  and $e_j\not \in E_i$
to mean that  $E_i$ does not contain the row $e_j$. 
Define $E_j^*$ to be the transpose of $E_j$ with $e_i\T$ replaced by
$f_i$. For example if $E=\begin{pmatrix} e_0 \\ e_1 \\ e_3 \end{pmatrix}$
then $E^*= (f_0,f_1,f_3)$.  

Because $e_if_j = \de_{ij}$ the construction of the right inverse of $G[z]$, when
it exists, and the construction of the control matrix for the
convolutional code generated by $G[z]$ becomes
straightforward. Knowing the control matrix often leads 
to an algebraic decoding technique. 

Assume now that $E_0$ has rank $r$; this is not a great
restriction since in any case it is required that  $\rank (G[z]) = r$. Hence the zero
vector does not occur in the construction of $E_0$. 
When $E_0$ is formed from rows 
$e_i$ which are not contained in any of
 the other $E_i$, for $ i\geq 1$, 
then the resulting matrix $G[z]$ in (\ref{eq:geneq}) is easily seen to be 
non-catastrophic.

\begin{lemma}\label{jerome} Suppose  $G[z]=E_0+E_1z+\ldots +
 E_sz^r$, as in equation (\ref{eq:geneq}), and 
that $e_j\in E_0$ implies
 $e_j\not \in E_i$ for $i\neq 0$. Then $G[z]$ is a noncatastrophic
 generator matrix for a convolutional $(n,r)$ code.  
\end{lemma}  
\begin{proof} By \cite{mac} if there exists a polynomial matrix $H[z]$ such
 that $G[z]H[z]= I_r$, the identity $r\ti r $ matrix, then $G[z]$ is
 noncatastrophic and it must then have rank $r$. Now $e_if_j=\delta_{ij}$
 and thus $E_iE_0^* =
 \delta_{i0}I_r$ as no row of $E_i$ is a row of $E_0$ for $i\neq 0$. It 
 then follows that $G[z]E_0^*=I_r$.
\end{proof}

The above is a simple case but many other cases exist where the
$G[z]$ constructed is 
noncatastrophic; see the `principles' adopted in Section \ref{catcat} below. 
In these cases the control matrix may also be constructed in
short order. 

\subsection{Design to specific parameters} The general method allows the design of convolutional codes to specific requirements.  To design a $(n,r,\de;\mu)$
convolutional code, find a suitable $n\ti n$ system $U,V$ with $UV=1$ and then  choose
appropriate $E_0,E_1, \ldots, E_\mu$ where each $E_i$ are $r\ti n$ and consist of rows of $U$ with  possible zero rows. 

Below we are more specific in the designs to achieve other objectives.
Most of the
convolutional codes in use and available have been constructed on a
case by case basis and/or by computer; computer techniques are now
often beyond the range of computers. Codes in use can be shown to be equivalent to special cases of the design methods here. 

See Section \ref{specify} for specific design methods using Fourier and/or Vandermonde unit schemes.
These have very good distance properties and efficient error-correcting algorithms.

Matrices, necessarily invertible,  with additional 
properties such as being  
unitary or orthogonal  may also be chosen  to design  suitable 
 convolutional codes with specific properties (such as for example 
being {\em LDPC} or dual-containing); see Sections \ref{blocks} and \ref{selfdual} for these developments.


\subsection{Design noncatastrophic generator matrices}\label{catcat}
In designing such a $G[z]$ as in equation (\ref{eq:geneq}),  
 it is desirable that
$G[z]$ be noncatastrophic. In so designing  the generator 
matrix constructed using formula (\ref{eq:geneq}),   the following principles are helpful. 
\begin{itemize} \item\label{level} If a row occurs in both
$E_j$ and $E_j$ for $i\neq j$ then it should not occur at the same
`level' in both of these, that is if the row $e_k$ occurs as the
$s^{th}$ row of $E_i$ then it should not occur as the $s^{th}$ row of
$E_j$.   \item Some row of $E_0$ should not occur in any other
  $E_j$.  Although this is not always necessary, it is desirable.
\item If $e_i$ occurs in $s^{th}$ row of $E_k$ and $e_j$ occurs in
  $t^{th}$ row of $E_k$ then $e_i$ should not occur in $t^{th}$ row of
  some other $E_w$ whenever $e_j$ occurs in $s^{th}$ row of $E_w$; in
  other words if $e_i,e_j$ occur in some $E_k$ then they don't occur
  at switched around level in some other $E_w$.  
\item It is also helpful that some row in the coefficient of the highest power of $z$ does not occur in any other $E_j$. 
\end{itemize} 


\subsubsection{Noncatastrophic versus catastrophic}
A slight change in the ordering of the rows in the construction can make
a major difference to the generator matrix; this is demonstrated by the following examples.   


Let $UV=I_4$ in a field $\F$ where  $U$ has rows
   $\{e_0,e_1,e_2,e_3\}$ in order and $V$ has columns in order
   $\{f_0,f_1,f_2,f_3\} $ in order.
Define $G[z]= \begin{pmatrix}e_0 \\ e_1 \\ e_2  \end{pmatrix}+
\begin{pmatrix}e_1 \\ e_2 \\ e_3 \end{pmatrix}z $. Then $G[z]$ is a
     noncatastrophic matrix with right polynomial 
 inverse $K[z]= (f_0,f_1,f_2)-
     (\un{0},f_0,f_1)z + (\un{0},\un{0},f_0)z^2$. 
 Let $H[z] =
 f_3-f_2z+f_1z^2-f_0z^3$. It is easily checked that $G[z]H[z]=0_{3\ti 1}$ and
 thus $H[z]$ is a control matrix for the convolutional $(4,3,3)$ code
 generated by $G[z]$. The free distance of this code depends on the
 distances of linear codes generated by taking rows of $U$. 

 Now let
$K[z]= \begin{pmatrix}e_0 \\ e_1 \\ e_2\end{pmatrix}+
\begin{pmatrix}e_3\\ e_1 \\ e_2 \end{pmatrix}z $ and then $K[z]$ is a
     catastrophic matrix  as $L[z]=(f_0,f_1,f_2) -
     (\un{0},f_1,f_2)z + (\un{0},f_1,f_2)z^2 - (\un{0},f_1,f_2)z^3+
     \ldots \ldots $ is its right inverse and  
 $K[z]$ does not have a polynomial
     right inverse. 

Thus a small change in the order of the chosen rows changes the
generator matrix from being noncatastrophic to
     being catastrophic. Notice in $K[z]$ that 
     the first `principle' in 
     \ref{level} above is violated. 


\subsection{Permuting rows} 
Permuting the rows in equation (\ref{eq:geneq}) gives generator
matrices for {\em different} convolutional codes. 
Some of these are equivalent but not many. 
Different convolutional codes even of the same rate 
are obtainable from the same unit scheme.

\subsection{Explicit prototype examples}\label{examples1}
Prototype examples are given. Those presented here are necessarily  of
relatively small length although large  length and large error-capability are
attainable in practice.     
Examples \ref{item00} and \ref{item0}  below demonstrate how codes  in
the literature occur as special cases of the general construction. 
\begin{enumerate}
\item\label{item13} Let   
$U,V\in \F_{5\ti 5}$ satisfy  $UV=I_5$ where $U=\begin{pmatrix} e_0 \\
 e_1 \\ e_2 \\ e_3 \\ e_4  \end{pmatrix}$  and  
 $V=(f_0,f_1,f_2,f_3, f_4) $ are the row and column structures of
$U,V$ respectively.  
Let $G[z]= \begin{pmatrix}e_0 \\ e_1 \\ e_2\end{pmatrix}+
\begin{pmatrix}e_1\\ e_2 \\ e_3 \end{pmatrix}z +\begin{pmatrix}e_2 \\ e_3
\\ e_4 \end{pmatrix}z^2 $.  Then $G[z]$ is a generator matrix
 of a $(5,3,6)$ convolutional code. It is seen directly 
that $G[z]\{(f_0,f_1,f_2)-(\un{0},f_0,f_1)z\}= I_{3\times 3}$ and thus
$G[z]$ is a noncatastrophic generator matrix of rank $3$.  
  
 Let $H[z] = (f_3,f_4)- (f_2, f_3)z + (f_0,f_1)z^3 - (\un{0},f_0)z^4 $. 
 Then $G[z]H[z]=0_{3\times 2}$ and so
     $H[z]$ is a control $5\ti 2$ matrix for the convolutional code. 

The degree of the code is $6$ and the rank is $3$
     so the GBS for such a code is $(5-3)(3)+6+ 1=13$ by \cite{ros}. Now
     $U$ may be chosen so that $(e_0,e_1,e_2), (e_1,e_2,e_3),
     (e_2,e_3,e_4)$ generate mds linear $(5,3,3)$ codes; this may be
     done by for example choosing $U$ to be a Fourier $5\ti 5$ matrix
     over a suitable field, see \cite{hurleyunit}.  A suitable field is
     $GF(11) = \Z_{11}$ or any field which has a primitive $5^{th}$ root
     of unity.  

The nature of the control matrix is such that it can correct up to one error
     at each vector component of a polynomial  error vector. 
\item The method in item \ref{item13} may be generalised to give $(n,r)$ convolutional codes which can correct up to $\floor{\frac{n-r}{2}}$ errors at each vector component of an error polynomial vector. 


\item\label{item00} Consider $U=\begin{pmatrix} 1 & 1 \\ 1&0 \end{pmatrix}, V= \begin{pmatrix} 0& 1 \\ 1&1 \end{pmatrix}$ in $GF(2) = \Z_2$. Then $UV= I$.  Denote the rows of $U$ in order by $e_0, e_1$ and the columns of $V$ in order by $f_0,f_1$. Define $G[z] = e_0 + e_1 z$. Let $K[z] = f_0$ and then $G[z]K[z] = 1$.  Thus $G[z]$ is a noncatastrophic generator matrix for a convolutional code. Let $H[z] = f_1 - f_0z$. Then $G[z]H[z] = 0_{1\ti 1}$ and hence $H[z]$ is a control matrix of the code. The free distance is $3$. 

Now   
     let $G[z]= e_0+e_1z+e_0z^2$. The  convolutional code generated
     by $G[z]$ is  exactly the same as that  given on  page 276 of  Blahut's book
     \cite{blahut} as  $G[z] = (1+z+z^2, 1+z^2)$. It is
     easily seen that $G[z]$ is 
     catastrophic directly from the form given here: Let  $H[z] = f_0-f_0z^2 + f_0z^4 - \ldots $  and 
then  $G[z]H[z]= 1$.  $G[z]$ has no polynomial right
inverse.  
\item\label{item0} Consider $U=\begin{pmatrix} 0 & 1 & 0 \\ 1 & 0 & 1 \\ 1 & 0 & 0
		  \end{pmatrix}$ over $Z_2=GF(2)$. Now $U$ is
      invertible. Denote the rows, in order, of $U$ by $\{e_0,e_1,e_2\}$ and
      the columns, in order, 
      of the inverse $V$ of $U $ by $\{f_0,f_1,f_2\}$. Define $G[z] =
      \begin{pmatrix} e_0 \\ e_1 \end{pmatrix} + \begin{pmatrix}e_2 \\
      \un{0} \end{pmatrix}z + \begin{pmatrix} \un{0} \\ e_0
			      \end{pmatrix}z^2$ . 
Now $G[z] = \begin{pmatrix} z& 1& 0 \\ 1 & z^2 & 1 \end{pmatrix}$ and
      thus $G[z]$ is the same as that constructed on page 279 of
     Blahut,  \cite{blahut}.

In the form given it is easy to find the right inverse for $G[z]$ and
      the control matrix for the convolutional $(3,2,4)$ 
code generated by $G[z]$.
Let $K[z]= (f_0,f_1)- (f_1,\un{0})z^2$ and then $G[z]K[z] = I_{2 \ti
  2}$. Letting  $H[z] = f_2 -f_0z + f_1z^2 $ then $G[z]H[z] = 0_{2\ti 1}$. 


\item\label{item1} 
Suppose  
$U,V\in F_{3\ti 3}$ satisfies  $UV=I_3$ where $U=\begin{pmatrix} e_0 \\ 
 e_1 \\ e_2  \end{pmatrix}$  and  
 $V=(f_0,f_1,f_2) $ are the row and column structures of
$U,V$ respectively.  
Consider $G[z]= e_0 + e_1z + e_2z^2$ . 
 By direct verification 
 $G[z]*f_0 = I_{1\times 1}$ and thus
$G[z]$ is a noncatastrophic matrix of rank $1$ of a $(3,1,2)$
      convolutional code $\mathcal{C}$. Also
$G[z]*\{(f_1,f_2)-(f_0,\un{0})z - (\un{0},f_0)z^2\} = 0_{1\ti 2}$ 
	where $\un{0}$ is the $3\ti 1$ zero matrix gives the control
      matrix  $H=(f_1,f_2)-(f_0,\un{0})z - (\un{0},f_0)z^2$ of the
      code. 

The maximum free
distance, GSB, of a $(3,1,2)$ code is, by \cite{ros}, $9$.
Suppose now $U$ is
a matrix over $F$ such that any $r$ rows for $1\leq r \leq 3$ generate
an mds linear code, that is generate a $(3,r,3-r+1)$ linear code. Such
matrices exist but depend on the field $\F$.  
In fact  this code attains the free GSB 
distance $9$ and is an MDS convolutional code; this may be verified directly by
      looking at $f[z]*G[z]$ where $f[z]$ is a polynomial. This is 
      a special case of theorem \ref{general1} below.



\item\label{item2} 
Suppose  
$U,V\in F_{3\ti 3}$ are as in example \ref{item1}. 
Let $G[z]= \begin{pmatrix}e_0 \\ e_1 \end{pmatrix}+
\begin{pmatrix}e_1\\ e_2 \end{pmatrix}z  = E_0+E_1z$, say.  Then $G[z]$ is a noncatastrophic generator matrix
of rank $2$ of a $(3,2,2)$ convolutional code $\mathcal{C}$. By direct verification 
 $G[z]*\{(f_0,f_1)-(\un{0},f_0)z \} = I_{2\times 2}$ and thus
$G[z]$ is a noncatastrophic matrix of rank $2$; here $\un{0}$ is the
$3\times 1$ zero matrix. 
 The $3\times 1$ (control) matrix $H[z]$ of rank $1$   
such that  $G[z]H[z]=0_{2\times 1}$ 
 is  $H[z] = f_2-f_1z+f_0z^2$.

The maximum free
distance of a $(3,2,2)$ code is  $5$ by \cite{ros}. 
Suppose now $U$ is
a matrix over $F$ such that any $r$ rows for $1\leq r \leq 3$ generate
an mds linear code, that is generate a $(3,r,3-r+1)$ linear code. Such
matrices exist but depend on the field $\F$ ,see \cite{hurleyunit} and below. 

Define now  $G[z]= \begin{pmatrix}e_0 \\ e_1 \end{pmatrix}+
\begin{pmatrix}e_1\\ -e_2 \end{pmatrix}z$ in a field of characteristic
$\neq 2$ where $U$ is the Fourier $3\ti 3$ matrix. This gives a GSB  
 convolutional  code $(3,2,2;1,5)$. 
\item\label{example22} Let $U,V \in F_{3\ti 3}$ be as in
  \ref{item1}. Define $G[z]= \begin{pmatrix}e_0 \\ e_1 \end{pmatrix}+
\begin{pmatrix}e_1\\ e_2 \end{pmatrix}z + \begin{pmatrix} e_2
  \\ e_3 \end{pmatrix}z^2$. Then $G[z]\{(f_0,f_1) - (\un{0},f_0)\} =
I_2$ and so $G[z]$ is a noncatastrophic generator matrix of a
$(3,2,4)$ convolutional code. The control matrix is $H[z]=(f_2,f_3) -
(f_1,\un{0})z -(\un{0},f_1)z^2 + (\un{0},f_0)z^3$. 
\item 
Matrices in  examples \ref{item2} and \ref{item1} may be given precisely as follows. Let $U= \begin{pmatrix}
1 & 1 & 1 \\ 1 & \om & \om^2 \\ 1&\om^2 & \om \end{pmatrix}
= \begin{pmatrix} e_0 \\ e_1 \\ e_2 \end{pmatrix}$ where
$\om$ is a primitive 3rd root of unity in a field $F$ in which the 3rd
root of unity exists. 

The  fields $\F$ that may be used  
include $GF(2^2),  GF(7)$. 
Then $G[z] = \begin{pmatrix} e_0 \\ e_1 \end{pmatrix}
+ \begin{pmatrix} e_1 \\ e_2\end{pmatrix}z$ is a noncatastrophic
  generating matrix for a $(3,2,2)$ convolutional code which has free
  distance $\geq 4$ and such that the distance of $f[z]G[z]$,
  where $f[z]$ is a polynomial of $1\ti 3$ vectors of support $t$, is at least
  $4+(t-1)$. When the field has characteristic $\neq 2$ then
$G[z] = \begin{pmatrix} e_0 \\ e_1 \end{pmatrix}
+ \begin{pmatrix} e_1 \\ -e_2\end{pmatrix}z$ is a noncatastrophic
 generator matrix of a $(3,2,2)$ convolutional code of free distance
 $5$ and such that the free distance of $f[z]G[z]$,
  where $f[z]$ is a polynomial $1\ti 3$ of support $t$, is at least
  $4+(t)$.

For example when $\F=GF(7)$, and noting that $2$ is a primitive 3rd root
of unity in $GF(7)$, we obtain the GSB $(3,2,2)$ code with
generator matrix $G[z] = \begin{pmatrix} 1 & 1 & 1 \\ 1 & 2 &
  4 \end{pmatrix} + \begin{pmatrix} 1& 2 & 4 \\ -1 & -4 &
  -2 \end{pmatrix}z$ over $GF(7)$. 


\item\label{example5} Suppose now $U,V$ are as in 
example \ref{item13}. 
Let  $G[z]= \begin{pmatrix}e_0 \\ e_1 \\ e_2 \end{pmatrix}+
\begin{pmatrix}e_2 \\ e_3
\\ e_4 \end{pmatrix}z $.  Then $G[z]$ is a noncatastrophic generator matrix
of rank $3$ of a $(5,3,3)$ convolutional code . We may verify directly 
that $G[z]*\{(f_0,f_1,f_2)-(\un{0},\un{0},f_0)z\}= I_{3\times 3}$ and thus
$G[z]$ is a noncatastrophic matrix of rank $3$; here $\un{0}$ is the
$5\times 1$ zero matrix. Also
      $G[z]\{(f_3,f_4)-(f_1,f_2)z+(\un{0},f_0)z^2\}= 0_{3\ti 2}$ then
      gives the control matrix.

      The GSB for a $(5,3,3)$ convolutional code is by
      \cite{ros} 2(2)+3+1 = 8. However $\al_0G[z]$ may have  distance
      $6$ for a $1\ti 3$ vector $\al_0\in F^{1\ti 3}$.  
It may be shown that $f[z] G[z]$ has distance $6+(t-1)$ where $t$ is
      the support of $f[z]$. 



\end{enumerate}  

High rate $\frac{(n-1)}{n}$ convolutional codes are presented from the general methods. These are shown to have nice algebraic decoding
techniques. 

\begin{enumerate}\item\label{high1}
Let $UV=1$ in $F_{4\ti 4}$ where $U$ has rows $e_0,e_1,e_2,e_3$ and
$V$ has columns $f_0,f_1,f_2,f_3$.
Define $G[z]= \begin{pmatrix} e_0 \\ e_1 \\ e_2 \end{pmatrix}
+ \begin{pmatrix} e_2 \\ e_3 \\ e_1 \end{pmatrix}z$.

The right inverse $K[z] = (f_0,f_1,f_2) - (\un{0}, f_2,f_0)z +
(\un{0},f_0,\un{0})z^2$ is calculated and shows that $G[z]$ is a
noncatastrophic generator for a $(4,3,3)$ convolutional code. The
control matrix is $H[z] = f_3 - f_1z+f_2z^2-f_0z^3$ with $G[z]H[z]=
0$. 

The GBS for a $(4,3,3)$ convolutional code is, by \cite{ros},
$6$. Provided $\begin{pmatrix} e_0 \\ e_1 \\ e_2 \end{pmatrix}$
and $\begin{pmatrix} e_2 \\ e_3 \\ e_1 \end{pmatrix}$ generate
$(4,3,2)$ linear codes this convolutional code has free distance
$4$. However it will correct errors of the form $\al z^i$ with $\al\in
F^4$ and errors of the form $\al[z] = \al_0 z^{i_0} + \al_1z^{i_1} + \ldots + \al_rz^{i_r}$,
where $i_{j+1} \geq i_j + 4 $ for $0 \leq j \leq r-1$, and $\al_i\in
F^4$.

\item\label{high2} Let $U,V$ be as in construction \ref{high1}. above. 
Define  $G[z]= \begin{pmatrix} e_0 \\ e_1 \\ e_2 \end{pmatrix}
+ \begin{pmatrix} e_3 \\ e_2 \\ \un{0} \end{pmatrix}z$.
Then $K[z]= (f_0,f_1,f_2)-(\un{0},\un{0},f_1)z$ satisfies
$G[z]K[z]=I_3$ and shows that $G[z]$ is a noncatastrophic generator
for a $(4,3,2)$ convolutional code. The GSB for such a code is, by \cite{ros}, 4. The
control matrix is $H[z]= f_3 -f_0z$. The code can correct one error
provided $(e_1,e_2)$ generates a $(4,2,3)$ linear code. It can also
correct errors in each coefficient of an error-polynomial in which the
degrees are at least two apart. 
\item\label{high3} Let  
Let $UV=1$ in $F_{5\ti 5}$ where $U$ has rows $e_0,e_1,e_2,e_3,e_4$ and
$V$ has columns $f_0,f_1,f_2,f_3,f_4$.
Define $G[z]= \begin{pmatrix} e_0 \\ e_1 \\ e_2 \\ e_3 \end{pmatrix}
+ \begin{pmatrix} e_3 \\ e_4 \\ e_1 \\ e_2 \end{pmatrix}z$.
The right inverse for $G[z]$ is $K[z] =
(f_0,f_1,f_2,f_3)-(\un{0},f_2,f_3,f_0)z +
(\un{0},f_3,f_0,\un{0})z^2-(\un{0},f_0,\un{0},\un{0})z^3$ and hence
$G[z]$ is a noncatastrophic generator for a $(5,4,4)$ convolutional
code. The GSB for such a code is $7$. The control matrix for the code
is $H[z]=f_4-f_1z+f_2z^2-f_fz^3+f_0z^4$. The GBS is not
attained. However this code can correct errors
of the form $\al z^i$ with $\al\in
F^5$ and errors of the form $\al[z] = \al_0 z^{i_0} + \al_1z^{i_1} + \ldots + \al_rz^{i_r}$,
where $i_{j+1} \geq i_j + 4 $ for $0 \leq j \leq r-1$, and $\al_i\in
F^5$. In other words, provided the errors occur in vectors at least 
degree $5$ apart then the control matrix will correct these
algebraically. 
\item\label{high4} A code similar to example \ref{high3} with MDS can
  be made as follows. Let $U,V$ be as in example \ref{high3}. 
 Define $G[z]= \begin{pmatrix} e_0 \\ e_1 \\ e_2 \\ e_3 \end{pmatrix}
+ \begin{pmatrix} e_4 \\ e_3 \\ e_1 \\ \un{0} \end{pmatrix}z$.
The right inverse for $G[z]$ is $K[z] =
(f_0,f_1,f_2,f_3)-(\un{0},f_2,\un{0},f_0)z +
(\un{0},\un{0},\un{0},f_2)z^2$ and hence
$G[z]$ is a noncatastrophic generator for a $(5,4,3)$ convolutional
code. The GSB for such a code is, by\cite{ros}, $5$.
The control matrix is $H[z]=f_4-f_0z$. Then provided $(e_1,e_2,e_3)$
generates a $(5,3,3)$ linear code, $G[z]$ can be shown to be an MDS
$(5,4,3;1,5)$ convolutional code.   
\item The process in the previous examples can be continued so as to
  construct high rate convolutional codes with good error correcting
  properties. The codes will correct a single vector in error or a
  series of such errors provided they are sufficiently apart. 
\end{enumerate}

 
\subsection{Algebraic decoding techniques}\label{decoding}
Here we give algebraic decoding methods for examples in
Section \ref{examples1}. Note that these are prototype examples and the decoding techniques themselves can be
applied to similar higher length constructions. 

The following Lemma is straight forward but is useful for the algebraic decoding techniques.
\begin{lemma}\label{refdecod}
Let the columns of an $n\ti n$ invertible matrix $A$ be denoted by $f_0,
f_1, \ldots, f_{n-1}$. Suppose $\al$ is a $1\ti n$ vector and that
the scalars $\al f_i$ are known for $0\leq i \leq n-1$. Then the
vector $\al$ may be determined from the inverse of $A$.
\end{lemma}
\begin{proof} It is seen that $\al(f_0,f_1,\ldots, f_{n-1})$ is known
  and thus $\al A$ is known. But $A$ is invertible and so $\al$ is
  obtainable from the inverse of $A$.
\end{proof}  

Notice that the inverse of $A$ may  already be known when the code is
constructed. 

This technique is useful when all the columns of a nonsingular matrix appear in the control matrix. 

Consider now the code in Section \ref{examples1},  example  \ref{item2} which has
check matrix $H=f_2-f_1z + f_0z^2$. Assume
now an error in transmission has the form $\al $ or more
generally of the form $\al z^i$ for an unknown $\al \in F^3$. Then
$\al H$ is known from which it is deduced that that the $\al f_i$ are known
for $i=0,1,2$. Hence by Lemma \ref{refdecod} $\al$ is known from the
inverse of $(f_0,f_1,f_2)$ which is  $U=\begin{pmatrix} e_0 \\
 e_1 \\ e_2  \end{pmatrix}$. It may also be shown that an error of the
form  $\al[z] = \al_0 z^{i_0} + \al_1z^{i_1} + \ldots + \al_rz^{i_r}$,
where $i_{j+1} \geq i_j + 3 $ for $0 \leq j \leq r-1$,   can be
corrected.  

Consider now the code in Section \ref{examples1}, Item \ref{item1}, 
where the control matrix is $H[z] = (f_1,f_2)
- (f_0,\un{0})z -(\un{0}, f_0)z^2$. We show that this can correct, by an algebraic
method, errors of the form $\al[z]= \al z^i + \be z^j$. 
\begin{lemma} Suppose $\al[z]H[z]$ is known with $\al[z]= \al z^i + \be
 z^j$. Then $\al[z]$ can be determined. 
\end{lemma}
\begin{proof} We do this for $\al[z] = \al + \be z$. The more general
  case is similar. 
Looking at the coefficients of $z^0,z^1, z^2,z^3$ in $\al[z]H[z]$ in turn, then 
(i) $\al f_1, \al f_2$ are known, (ii) $\al f_0 + \be f_1, \be f_2$ are known, (iii) $\be
 f_0, \al f_0$ are known, (iv) $\be f_0$ is known. From this it is seen that $\al f_0, \al
 f_1, \al f_2$ and $\be f_0, \be f_1, \be f_2$ are known 
from which by Lemma \ref{refdecod} $\al$ and $\be$ can be  determined. Note 
 $\be f_0$ is determined in a different way and this is an extra check.  

\end{proof}   

Consider now example  \ref{example5} in which the control matrix is 
$H[z]=(f_3,f_4)-(f_1,f_2)z+(\un{0},f_0)z^2$. If the error is of form $\al z^i$
then $\al z^iH[z]$ determines $\al f_3,\al f_4, \al f_1,\al f_2,\al f_0$
and so by Lemma \ref{refdecod} $\al z^i$ is determined. More generally errors of the form $\al + \be z^i$ for $i\geq 3$ are
determined or more generally errors of the form $\al z^i + \be z^j$ with
$j\geq i + 3$ are determined. Errors of the form $\al + \be z^2$ are
also determined provided either $\al$ or $\be$ have at most two non-zero
entries and any row of $U$ generates a $(5,1,5)$ (which is then two-error
correcting) linear code.



\section{Specify rate and errors}\label{specify}

Suppose it is required to construct a convolutional code with rate $R$
and which can correct $t$ errors at each vector  component.

To correct $t$ errors at each vector component specifically means that if $\al_0 + \al_1z+
\ldots $ is the error vector then up to $t$ errors may be corrected
at each $\al_i$.

Here convolutional codes of memory 1 satisfying these
conditions are shown to exist with efficient decoding algorithms and with maximum possible distance.

Let $R=\frac{r}{n}$ be the required rate and suppose it is
required to correct $t$ errors at each component. Require $n-r =
2t$. Suppose now $n-r$ is even; the case where $n-r$ is odd is
similarly dealt with. As $r=nR$ require $n(1-R) = 2t$ which implies
$n= \frac{2t}{1-R}$.

Let $R=\frac{r}{n}$ and require $n\geq \frac{2t}{1-R}$.

Let $U$ be an $n\ti n$ matrix defined by methods in \cite{hurleyunit}
with the property that rows taken in succession\footnote{There are
  other possibilities, see \cite{hurleyunit} and \cite{hurley} for more generality} generate an mds linear code; for example when
$U$ is a Fourier matrix,  this is the case see \cite{hurleyunit}. Now
$U$ has the form $U= \begin{pmatrix}e_0 \\ e_1 \\ \vdots
  \\ e_{n-1}\end{pmatrix}$ for rows $e_i, i= 0, 1, \ldots, n-1$.
 
Suppose first of all that $n> 2r$.  Define $A[z]= \begin{pmatrix}
  e_0 \\ e_1 \\ \vdots \\ e_{r-1}\end{pmatrix} +
 \begin{pmatrix} e_{r} \\ \vdots \\ e_{n-1} \\ \un{0} \\ \vdots \\ \un{0}\end{pmatrix}z$ 

Note that enough zero vectors are added at the end of $e_{n-1}$ so that the coefficient of $z$ also has $r$ rows.  
Then $A[z]$ defines a rate $\frac{r}{n}$ convolutional code of memory
1 and degree $n-r$; it may be called a {\em partial} unit-memory code 
in some quarters. 

By \cite{ros} the maximum free distance for a $(n,r,\de: \mu)$ convolutional
code is $(n-r)(\floor{\frac{\de}{r}}+ 1) + \de + 1$.  For the case here
this is $(n-r) + (n-r) +1 = 2(n-r)+1$. It is now shown  that this is 
the free distance attained.  Now

$\begin{pmatrix} e_0 \\ e_1 \\ \vdots \\ e_{r-1}\end{pmatrix} +
 \begin{pmatrix} e_{r} \\ \vdots \\ e_{n-1} \\ \un{0} \\ \vdots \\ \un{0}\end{pmatrix}z * ((f_r,f_{r+1}, \ldots,f_{n-1}) - (f_0, f_1,\ldots, f_{n-r-1})z) = 0_{r\ti (n-r)}$.

Thus $(f_r,f_{r+1}, \ldots,f_{n-1}) - (f_0, f_1,\ldots, f_{n-r-1})z$
is a control matrix for the convolutional code with $A[z]$ as
generator matrix.

Also $\begin{pmatrix} e_0 \\ e_1 \\ \vdots \\ e_{r-1}\end{pmatrix} +
 \begin{pmatrix} e_{r} \\ \vdots \\ e_{n-1} \\ \un{0} \\ \vdots \\ \un{0}\end{pmatrix}z * (f_0,f_{1}, \ldots,f_{r-1}) = I_{r\ti (r)}$,

and so $A[z]$ has a right inverse and hence is a non-catastophic
generator matrix of a convolutional code.

It will also correct $\floor{\frac{n-r}{2}}=t$ errors at each 
component of the error vector.
 This is illustrated by an example.

Let $R=\frac{7}{11}$ and it is required to correct $2$ errors on each
vector component of the error vector. Consider

$A[z]= \begin{pmatrix} e_0 \\ e_1 \\ e_2 \\ e_3 \\ e_4 \\ e_5  \\ e_{6}\end{pmatrix} +
 \begin{pmatrix} e_{7} \\ e_{8} \\ e_9 \\ e_{10} \\ \un{0} \\ \un{0} \\ \un{0} \end{pmatrix}z$.

The control matrix is $H[z]= (f_7,f_8,f_9,f_{10}) -
(f_0,f_1,f_2,f_3)z$.





Suppose now $E[z]= \un{\al}_0+ \un{\al}_1 z + \un{\al}_2z^2 + \ldots$
is an error vector where $\un{\al}_i$ are $11 \ti 11$ unknown vectors.

Now look at $E[z]H[z]$. The coefficient of $z^0$ gives that
$\un{\al}_0(f_7,f_8,f_9,f_{10})= \un{0}$. Now $(f_7,f_8,f_8,f_{10})$
is the check matrix of an $[11,7,5]$ linear code so if $\un{\al}_0$
has $\leq 2$ errors these can be located and determined. 

Having determined $\un{\al}_0$, now  by looking at the coefficient of $z$ and assuming
$\un{\al}_1$ has at most 2 errors then $\un{\al}_1$ can be determined.
Then proceed to find $\un{\al}_{i}$ once $\un{\al}_{i-1}$ has been
determined for $i>2$.

The error correcting method for the linear part has been determined in
\cite{hurleyunit}.

It is now  shown that the code has free distance $9$.   Let $G_0
= \begin{pmatrix} e_0 \\ e_1 \\ e_2 \\ e_3 \\ e_4 \\e_5 \\
  e_{6}\end{pmatrix}; \, G_1= 
 \begin{pmatrix} e_{7} \\ e_{8} \\ e_9 \\ e_{10} \\ \un{0} \\ \un{0} \\ \un{0} \end{pmatrix}$.

A codeword is $(\un{\al}_0+\un{\al}_1z+\un{\al}_2z^2+ ...)(G_0+G_1z)$.

Let $\un{\al}_0=(\al_1,\al_2, \ldots, \al_7)$.  Now $\un{\al}_0G_0$ has distance
$\geq 5$.  If $\un{\al}_0$ is not of the form $(0,0,0,0, *,*,*)$ then
$\un{\al}_0G_1$ has distance $\geq 8$ and hence $\un{\al}_0(G_0+G_1z)$ has free
distance $\geq 9$.  Suppose then $\un{\al}_0 = (0,0,0,0,*,*,*)$. Then
$\un{\al}_0G_0$ is a nontrivial sum of $ \{e_3,e_4,e_5\}$ and so has distance
$\geq 9$ as $\{e_3,e_4,e_5\}$ generate an $(11,3,9)$ code.

Similarly consider $(\un{\al}_0+\un{\al}_1 z + \ldots +
\un{\al}_kz^k)(G_0+G_1z)$. The coefficient of $z^0$ in this product 
 has distance $\geq 5$
and the coefficient of $z^{k+1}$ has distance $\geq 8$ unless $\un{\al}_k=
(0,0,0,0,*,*,*)$ in which case the coefficient of $z^k$ has distance
$\geq 9$. Thus the free distance is $9$ which is the maximum
attainable for such a code, \cite{ros}.

\section{Further Fourier/Vandermonde units}\label{cheb1}

More prototype samples derived from Fourier/Vandermonde  unit schemes 
are given here. 
 
Let  $U$ be a Fourier $n\ti n$ matrix over a field $\F$ and $UV=I_n$.
Then as shown in \cite{hurleyunit} any matrix formed by taking $r$ rows
in succession or in arithmetic sequence with difference $k$ satisfying
$\gcd(n,k)=1$   generates an  $(n,r)$ mds linear code. 

Using  such $U,V$ 
with which to  construct 
convolutional codes by the method of  Section \ref{rows} 
 will give good free distances and efficient decoding algorithms. 
 The free distances and/or lower bounds on the free distances 
may  often be proven  algebraically 
and the codes are relatively easy to implement and simulate.  

Suppose  a primitive  $n^{th} $ root of unity, $\al$, exists in a field
$K$.
The Fourier $n\ti n$ matrix $F_n$ over $K$ is

$F_n=\begin{pmatrix} 1& 1& 1 & \ldots & 1 \\ 1 & \al & \al^2 & \ldots
& \al^{(n-1)} \\ 1 & \al^2 & \al^4 & \ldots & \al^{2(n-1)} \\ \vdots &
\vdots & \vdots & \vdots & \vdots \\ 1 & \al^{n-1}& \al^{2(n-1)} &
\ldots & \al^{(n-1)(n-1)} \end{pmatrix}$.

As $n$ must then divide $(|K|-1)$, the inverse of $\al$ exists in $K$
and is easily determined. 

$F_n^*=\begin{pmatrix} 1& 1& 1 & \ldots & 1 \\ 1 & \al^{-1} &
  \al^{-2} & \ldots & \al^{-(n-1)} \\ 1 & \al^{-2} & \al^{-4} & \ldots
  & \al^{-2(n-1)} \\ \vdots & \vdots & \vdots & \vdots & \vdots \\ 1 &
  \al^{-(n-1)}& \al^{-2(n-1)} & \ldots &
  \al^{-(n-1)(n-1)} \end{pmatrix} $



Suppose the Fourier matrix $F$ over some field (for which it exists) has rows
$\{e_0,e_2,\ldots, e_{n-1}\}$. Then in general denote the inverse of $F$ by
$F^*$ and the columns of $F^*$ by $\{e_0^*, e_1^*, \ldots, e_{n-1}^*\}$.

\begin{theorem}\label{distancemds} Suppose $U$ is an $n\ti n$ matrix with rows $\{e_0, e_1, \ldots,
 e_{n-1}\}$ with $UV=I$ where $U$ is a Fourier matrix over a field $\F$. 
 Then the (linear) code
 generated by any $r$ of the rows of $U$ in succession or in arithmetic sequence $k$ with $\gcd(n,k)=1$ is an mds $(n,r,n-r+1)$ linear code.
\end{theorem}
\begin{corollary}\label{distance} A non-zero linear combination of
 $r$
  rows of $U$ has 
 distance (= number of non-zero entries) greater than or equal to $n-r+1$.
\end{corollary}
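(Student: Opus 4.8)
The plan is to read the statement off directly from Theorem \ref{distancemds} together with the elementary fact that, for a \emph{linear} code, the minimum distance equals the least Hamming weight occurring among the nonzero codewords. The corollary is really just a reformulation of the mds parameter $d=n-r+1$ in terms of the generating rows, so almost all of the work has already been done.

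First I would fix an arbitrary choice of $r$ rows $e_{j_1},\ldots,e_{j_r}$ of $U$ and assemble them into the $r\ti n$ matrix $E$. By Theorem \ref{distancemds} the code $\mathcal{C}_r=\text{im}\,E$ is an mds $(n,r,n-r+1)$ linear code, so its minimum distance is exactly $n-r+1$. Next I would identify the codewords explicitly: every element of $\mathcal{C}_r$ is of the form $uE=\sum_{i=1}^{r}c_i\,e_{j_i}$ for a coefficient vector $u=(c_1,\ldots,c_r)$, so the nonzero linear combinations of the chosen rows are precisely the nonzero codewords of $\mathcal{C}_r$.

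Finally, since the distance of a codeword is by definition its number of nonzero entries, and since the minimum distance of $\mathcal{C}_r$ is $n-r+1$, any nonzero codeword—hence any nonzero linear combination $\sum_i c_i e_{j_i}$—has at least $n-r+1$ nonzero entries. Because the initial choice of $r$ rows was arbitrary, the conclusion holds for every selection of $r$ rows of $U$.

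The only point needing any care—scarcely an obstacle—is verifying that a nontrivial combination of the selected rows is genuinely nonzero, so that it counts as a nonzero codeword to which the minimum-distance bound applies. This is exactly where $UV=1$ is used: invertibility of $U$ forces its rows to be linearly independent, so any $r$ of them are independent, $E$ has rank $r$, and $uE=0$ only when $u=0$. With that observed, the bound $\geq n-r+1$ is immediate.
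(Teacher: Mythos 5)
Your proof is correct and is exactly the intended derivation: the paper states this corollary without proof, and the expected argument is precisely yours, namely that any $r$ rows generate an mds $(n,r,n-r+1)$ code by Theorem \ref{distancemds}, so every nonzero codeword (equivalently, every nontrivial combination of those rows, which is nonzero since the rows of the invertible $U$ are independent) has weight at least $n-r+1$. Nothing is missing.
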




\subsection{Prototypes}\label{design} 
\subsubsection{Length $3$}
Suppose the Fourier $3\ti 3$ matrix $F_3$ exists over $K$. 
Cases of such $K$ are $GF(2^2)$, $GF(5^2)$, $GF(7)=\Z_{7}$. 
 Denote the rows of $F_3$ by $\{e_0,e_1,e_2\}$. Define
$G[z]=e_0+e_1z+e_2z^2$. Then $G[z]$ is a noncatastrophic generator
matrix for a $(3,1,3;3,9)$ convolutional code which is mds. For
example when $K=\Z_7$ then $2$ is a primitive $3^{rd}$ root of $1$. and
get $e_0=(1,1,1), e_1= (1,2,4), e_2=(1,4,2)$ with entries which are integers
modulo $7$. 

\subsubsection{Low rate}  
Take $s=n-1$ in equation (\ref{eq:geneq}) of
Section \ref{rows} 
and  use all the rows of
a Fourier  matrix $F_n$. 
Define $$G[z] = e_0+e_1z+\ldots + e_{n-1}z^{n-1}$$\label{test} where $F_n$ has rows
$e_i$, for $0\leq i \leq n-1$. 
This is a $(n,1,n-1;n-1)$
convolutional code with degree and memory $(n-1)$. The rate is not
very good particularly for large $n$  but indeed the maximum free
distance is attained.  The maximal free distance
attainable by  such a  code  is by \cite{ros}  
$(n-r)(\floor{\de/r}+1)+ \de +1 = (n-1)(n-1+1)+ n-1+1= n^2$. 

\begin{proposition}\label{general1} The free distance of the code generated by
 $G[z]$ above is $n^2$.
\end{proposition}
The proof is omitted. Note that each $e_i$ generates a $(n,1,n)$ code
and that any non-zero combination of $r$ of the $e_i$ has distance
$\geq (n-r+1)$. 

By Proposition \ref{jerome} $G[z]$ is noncatastrophic.
 It is fairly easy to show this directly and we give  an
independent proof and produce  the control/check matrix. 


\begin{lemma}\label{gen2} 

Let  $G[z] = e_0+e_1z+\ldots +
  e_{n-1}z^{n-1}$ as above.  

(i) Define  $H(z) = e_0^*$. Then $G[z]H(z) = 1$.

(ii) Define $K[z]= (e_1^*,e_2^*, \ldots,e_{n-1}^*) - (e_0^*, \un{0},
  \ldots, \un{0}) - (\un{0},e_0^*, \un{0}, \ldots, \un{0})z - \ldots -
  (\un{0}, \un{0}, \ldots, \un{0}, e_0^*)z^{n-1}$ where $\un{0}$ is
  the $n\ti 1$ zero matrix. Then $G[z]K[z] =
  0_{1\ti (n-1)}$ and $K[z]$ has rank $n-1$.
\end{lemma}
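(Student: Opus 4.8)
The plan rests on the biorthogonality relation forced by $UV=I$: since the $(i,j)$ entry of $UV$ is $e_i f_j$ and equals $\delta_{ij}$, and since $e_j^*=f_j$ by definition, we have $e_i e_j^* = \delta_{ij}$. From this I would extract the one identity that does all the work, namely that $G[z]$ applied to the $j$-th column $f_j$ of $V$ is
\[
G[z]\,f_j \;=\; \sum_{i=0}^{n-1} (e_i f_j)\,z^i \;=\; \sum_{i=0}^{n-1}\delta_{ij}\,z^i \;=\; z^j .
\]
Part (i) is then immediate from the case $j=0$: $G[z]H(z)=G[z]e_0^*=G[z]f_0=1$.

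For the first claim of part (ii) I would proceed column by column. The leading block $(e_1^*,\ldots,e_{n-1}^*)=(f_1,\ldots,f_{n-1})$ is sent by $G[z]$ to the row $(z,z^2,\ldots,z^{n-1})$ by the identity above. Each subtracted term places the single column $e_0^*=f_0$ into one coordinate slot scaled by a power of $z$, and since $G[z]f_0=1$ the term occupying the $j$-th slot is sent by $G[z]$ to exactly that power; the powers are arranged so that the $j$-th slot carries $z^{j}$, cancelling the $z^{j}$ coming from $f_j$. Thus every entry of $G[z]K[z]$ vanishes. Equivalently, writing $K[z]=(f_1,\ldots,f_{n-1})-f_0\,(z,z^2,\ldots,z^{n-1})$ as a single block minus a rank-one correction, one gets $G[z]K[z]=(z,\ldots,z^{n-1})-1\cdot(z,\ldots,z^{n-1})=0_{1\ti(n-1)}$ in one line.

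The only step that needs more than direct verification is the rank assertion, and this is where I would be most careful. I would establish full column rank by evaluating at $z=0$: every subtracted term carries a positive power of $z$, so $K[0]=(f_1,\ldots,f_{n-1})$, whose columns are $n-1$ of the columns of the invertible matrix $V$ and are therefore linearly independent over $F$. Hence some $(n-1)\ti(n-1)$ minor of $K[0]$ is nonzero, so the corresponding minor of $K[z]$ is a polynomial nonvanishing at $z=0$ and thus not identically zero; since $K[z]$ has only $n-1$ columns its rank over $F(z)$ is exactly $n-1$. As an alternative in the style of the control-matrix example preceding the lemma, I would adjoin the column $f_0$ and apply the matrix determinant lemma to $P[z]=(f_0\mid K[z])=V-f_0\,(0,z,\ldots,z^{n-1})$: using $V^{-1}=U$ and $Uf_0=(1,0,\ldots,0)\T$ gives $\det P[z]=\det V\neq 0$, so $P[z]$ is unimodular, which both forces the $n-1$ columns of $K[z]$ to be independent and re-derives $G[z]K[z]=0$.
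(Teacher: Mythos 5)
Your proof is correct and matches the paper's, which disposes of the lemma in one line ("these follow by direct multiplication on noting $e_ie_j^* = \delta_{ij}$"): your identity $G[z]f_j = z^j$ is precisely that verification, and your evaluation-at-$z=0$ (or unimodularity) argument supplies the rank claim that the paper asserts without proof. Note that you have silently, and correctly, read the correction terms of $K[z]$ as carrying the powers $z, z^2, \ldots, z^{n-1}$; as printed the statement starts them at $z^0$, which would give $G[z]K[z] \neq 0$, and the analogous check matrix in the length-$7$ example preceding the lemma confirms your reading.
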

\begin{proof} These follow by direct multiplication on noting 
$e_ie_j^*= \delta_{ij}$  (Kronecker delta).

\end{proof}
\begin{corollary}\label{gen1} $G[z]$ 
  is noncatastrophic. 
\end{corollary}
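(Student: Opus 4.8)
The plan is to invoke the right-inverse characterisation of noncatastrophicity recalled earlier: for a generator matrix of rank $r$ it suffices to exhibit a polynomial $n \times r$ matrix $H[z]$ of rank $r$ with $G[z]H[z] = I_r$. Since the code generated by $G[z] = e_0 + e_1 z + \ldots + e_{n-1}z^{n-1}$ has rank $r = 1$, I only need a single polynomial column $H(z)$ satisfying $G[z]H(z) = 1$ together with $H(z) \neq 0$.

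That column is already produced by Lemma \ref{gen2}(i): taking $H(z) = e_0^*$ gives $G[z]e_0^* = 1$, because $e_i e_0^* = e_i f_0 = \delta_{i0}$, so every term except the constant one annihilates. It then remains only to check that $e_0^* = f_0$ has rank $1$, which is immediate since $f_0$ is a column of the invertible matrix $V$ and is therefore nonzero. Hence $G[z]$ admits a polynomial right inverse of rank $1$ and is noncatastrophic by the criterion of \cite{mac}.

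There is essentially no obstacle here: the corollary is a direct reading of Lemma \ref{gen2}(i) through the definition of noncatastrophic, and the main work has already been carried out in proving that lemma by the displayed direct multiplication. The only point worth stating explicitly is the rank condition on $H(z)$, which for $r = 1$ reduces to $H(z)$ being nonzero. Alternatively one could appeal to Proposition \ref{jerome} (the single row $e_0$ forms $E_0$ and lies in no other coefficient matrix) or to Theorem \ref{first}, but the constant right inverse $e_0^*$ furnished by the lemma is the most economical route and has the added merit of exhibiting the inverse explicitly.
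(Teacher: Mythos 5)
Your proof is correct and follows the paper's own route: the corollary is deduced from Lemma \ref{gen2}(i), which exhibits the explicit polynomial right inverse $H(z)=e_0^*$ satisfying $G[z]H(z)=1$, and the rank-$1$ (nonvanishing) condition is immediate since $f_0$ is a column of the invertible matrix $V$. Your remark that Proposition \ref{jerome} or Theorem \ref{first} would also apply matches the paper's own aside; no gaps.
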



\subsubsection{Length $5$}

Let $F_5$ be a Fourier matrix over a field  $K$. 

In such a field let $\om$ be a primitive $5^{th}$ root of unity.  

Consider $GF(2^4)$.
Here note that $x^4+x^3+x^2+x+1$ is irreducible over $\Z_2$
and if $\al$ is a primitive element then choose $\om=\al^3$. 

In $GF(11)$ it is seen that the order of $2$ is $10$ 
with $2^{10}=1$ 
and let $\om = 2^2=4$ to get a primitive $5^{th}$ root
of unity.

In all cases 
$F_5= \begin{pmatrix}1&1&1&1&1 \\ 1&\om & \om^2 & \om^3 & \om ^ 4 \\ 1
  & \om^2& \om^4 &\om & \om ^3 \\ 1 & \om^3&\om & \om^4&\om \\ 1 &
  \om^4 & \om^3&\om^2&\om \end{pmatrix}$ 

When for example 
$K=GF(11)$ then  
$F_5 = \begin{pmatrix}1&1&1&1&1 \\ 1 & 4 & 4^2 &4^3&4^4 \\ 1 & 4^2
  &4^4 & 4&4^3 \\ 1 & 4^3 & 4& 4^4&4^2 \\ 1 & 4^4&4^3&4^2&4
       \end{pmatrix}=  \begin{pmatrix} 1&1&1&1&1 \\ 1&4 & 5 & 9& 3 \\ 1
& 5 & 3&4& 9 \\
	1& 9&4&3&5 \\ 1&3&9&5&4\end{pmatrix}$

is a Fourier matrix over $GF(11)$. 
The entries are elements of $\Z_{11}$. 



Let the rows of $F_5$ be denoted by $\{ e_0,e_1,e_2,e_3,e_4\}$.

By Proposition \ref{general1} the code generated by 
$G[z]= e_0+e_1z+e_2z^2+e_3z^3+ e_4z^4$ is an  
 GSB $(5,1,4;4,25)$ convolutional code and $G[z]$ is noncatastrophic.

Consider $E_0= \begin{pmatrix}e_0\\ e_1\end{pmatrix},
E_1=\begin{pmatrix}e_2 \\ e_3 \end{pmatrix} $ and let 
$G[z]= E_0+E_1z$. This is a $(5,2)$ with $\delta=2,\mu=1$ which can have
at best free distance $(n-r)(\delta/r+1)+\delta +1= 3(2)+2+1=9$ by \cite{ros}.
\begin{proposition}
(i) The matrix $G[z]$ is noncatastrophic.

(ii) The code generated by $G[z]$ has $d_{free}= 8$.

(iii) For any inputted word $f(z)$ of support $\geq 2$ the codeword  
 $f(z)G[z]$ has distance $\geq 10$.

\end{proposition}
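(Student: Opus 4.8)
The plan is to treat the three parts in turn, with (iii) carrying essentially all the work and (ii) following from it. For (i) I would exhibit a polynomial right inverse directly. Since the rows $e_0,e_1$ making up $E_0$ do not occur in $E_1$, Proposition \ref{jerome} already applies; concretely, writing $E_0^*=(f_0,f_1)$ for the corresponding columns of $V$, the relations $e_if_j=\delta_{ij}$ give $E_0E_0^*=I_2$ and $E_1E_0^*=0$, whence $G[z]E_0^*=I_2$. Thus $G[z]$ has the constant right inverse $E_0^*$ and is noncatastrophic of rank $2$.

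For (ii) I would prove the two bounds separately. Write an inputted word as $f(z)=\sum_i\alpha_iz^i$ with $\alpha_i\in F^2$, so the codeword is $f(z)G[z]=\sum_i c_iz^i$ with $c_i=\alpha_iE_0+\alpha_{i-1}E_1$. If $f(z)$ has support $1$, say $f(z)=\alpha z^j$ with $\alpha\neq0$, the codeword is $\alpha E_0z^j+\alpha E_1z^{j+1}$; the two blocks lie in different powers of $z$, and each of $\alpha E_0,\alpha E_1$ is a nonzero combination of two rows of $U$, hence of weight $\geq n-2+1=4$ by Corollary \ref{distance}. So a support-$1$ word has weight $\geq8$, and support-$\geq2$ words have weight $\geq10$ by part (iii); together these give $d_{free}\geq8$. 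For the upper bound the useful structural observation is that for $\alpha=(a,b)$ the $j$-th entries of $\alpha E_0$ and $\alpha E_1$ are $a+b\omega^j$ and $\omega^{2j}(a+b\omega^j)$, so both vanish in exactly the same coordinate. Choosing $\alpha$ with $a+b\omega^j=0$ for a single $j$ (e.g. $\alpha=(1,-1)$, killing the $0$-th coordinate) makes each of $\alpha E_0,\alpha E_1$ have weight exactly $4$, so the support-$1$ word $f(z)=\alpha$ produces a codeword of weight $8$. Hence $d_{free}=8$.

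For (iii), with $c_i=\alpha_iE_0+\alpha_{i-1}E_1$ as above, let $i_0=\min\{i:\alpha_i\neq0\}$ and $i_1=\max\{i:\alpha_i\neq0\}$; support $\geq2$ forces $i_0<i_1$. Then $c_{i_0}=\alpha_{i_0}E_0$ (since $\alpha_{i_0-1}=0$) is a nonzero combination of $e_0,e_1$, of weight $\geq4$, and likewise $c_{i_1+1}=\alpha_{i_1}E_1$ has weight $\geq4$. To extract the extra two I would examine the single coefficient $c_{i_0+1}=\alpha_{i_0+1}E_0+\alpha_{i_0}E_1$: since $\alpha_{i_0}\neq0$ and $e_0,e_1,e_2,e_3$ are linearly independent, its $E_1$-part cannot be cancelled, so $c_{i_0+1}\neq0$, making it a nonzero combination of at most four rows of $U$ and hence of weight $\geq n-4+1=2$ by Corollary \ref{distance}. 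Finally $i_0<i_0+1\leq i_1<i_1+1$ shows the indices $i_0,i_0+1,i_1+1$ are distinct, so the total weight is at least $4+2+4=10$.

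The only genuinely delicate point is squeezing out this last ``$+2$'': the naive argument sees only the two end coefficients $c_{i_0},c_{i_1+1}$ and yields $8$, which is exactly the free distance, so it cannot be enough. The fix is the observation that the coefficient $c_{i_0+1}$ immediately after the first active block is forced nonzero, and that its index is distinct from both endpoints precisely because $i_0<i_1$. The same bookkeeping, applied at every active coefficient rather than only at $i_0$, should upgrade the bound to the $\geq10+2(t-1)$ claimed for a word of support $t$ in the examples section, but that refinement is not needed for the statement as posed.
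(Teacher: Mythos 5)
Your proof is correct and follows the same route as the paper: a constant right inverse $E_0^*$ for (i), and a coefficient-by-coefficient application of Corollary \ref{distance} for the distance bounds. You do supply two things the paper leaves implicit --- the explicit choice $\alpha=(1,-1)$ (with the observation that $\alpha E_0$ and $\alpha E_1$ vanish in the same coordinate) showing weight $8$ is actually attained, where the paper only asserts this is ``easy to show \dots by considering $\alpha_0G[z]$'', and the full argument for (iii), whose key step --- that the coefficient $c_{i_0+1}$ is a forced-nonzero word of the $(5,4,2)$ code generated by $e_0,\ldots,e_3$ and hence has weight $\geq 2$, at an index distinct from $i_0$ and $i_1+1$ --- is precisely what the paper's one-line hint about the $(5,4,2)$ code points to but does not carry out.
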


The proof is omitted. 


\subsubsection{Length $7$}

Consider a Fourier $7 \ti 7$ matrix $F_7$ over a field $K$. 
Denote the rows of $F_7$ by $\{e_0,e_1, \ldots,
e_6\}$. 

Setting $G[z]= e_0+e_1z+e_2z^2+ \ldots + e_6z^6$ gives a $(7,1,6;6,
49)$ by Theorem \ref{general1}. 
Setting $E_i=\begin{pmatrix} e_i \\ e_{i+1} \end{pmatrix}$ for
$i=0,1,2$ and then letting $G[z]= E_0+E_1z+E_2z^2$ gives a
  $(7,2,4;2,18)$ code. The maximum free
  distance of such a $(7, 2, 4;2)$ code is $(5)(4/2+1)+4+1=20$. 

Suppose we wish to obtain a memory $3$ code from this structure. Let
$E_0= \begin{pmatrix} e_0 \\ e_1 \end{pmatrix}, E_1= \begin{pmatrix}
  e_2 \\ e_3 \end{pmatrix}, E_2 = \begin{pmatrix} e_4
  \\ e_5 \end{pmatrix},
E_3= \begin{pmatrix} e_5 \\ e_6 \end{pmatrix}$. Note here that $E_2,
E_3$ have $ e_5$ as common. Set $G[z] = E_0+E_1z+E_2z^2+E_3z^3$. Then
$G[z]$ is noncatastrophic follows  as it is easily
seen that $G[z]E_1^*= I_{2\ti 2}$. 
It may be shown that this is a
$(7,2,6;3, 24)$ code. The maximum free distance of such a $(7,2,6;3)$
code is by \cite{ros} $(5)(6/2+1)+6+1 = 27$.

Setting $E_0=\begin{pmatrix} e_0 \\ e_1 \\ e_2 \end{pmatrix},
E_1=\begin{pmatrix} e_3 \\ e_4 \\ e_5 \end{pmatrix}$ and $G[z] =
E_0+E_1z$ gives a noncatastrophic matrix generator of a $(7,3,3;1,10)$
code.
The maximum free distance of such a code is $4(3/3+1)+3+1= 12$. Also
note that $f(z)G[z]$ has distance $\geq 12$ for any $f(z)$ of support
$\geq 2$. Permuting the $e_i$ gives $7!$ such codes.  

Setting $E_0=\begin{pmatrix} e_0 \\ e_1 \\ e_2 \end{pmatrix},
E_1=\begin{pmatrix} e_3 \\ e_4 \\ e_5 \end{pmatrix},
E_2=\begin{pmatrix} e_4 \\ e_5 \\ e_6 \end{pmatrix}$ and then $G[z] =
E_0+E_1z+E_2z^2$, gives a $(7,3,6;2)$ code. $G[z]$ is noncatastrophic
as may be verified directly. 
The free distance is $15$. 
The maximum free distance
of such a code is $(4)(6/3+1)+6+1=17$.   
\subsubsection{$GF(23)= \Z_{23}$}
The next case taken is that of a Fourier matrix over a finite field in
which the entries are elements of $\Z_{23}$.

  In $GF(23)$ a primitive element is $5$ and so $5^2=2 (\mod 23)$ is
  an element of order $11$ from which the Fourier matrix $F_{11}$ over
  $GF(23)$ can be constructed. This gives

$F_{11} =\begin{pmatrix} 1 & 1 &1 & \ldots &1 \\ 1 & 2 & 2^2 & \ldots
  & 2^{10} \\ 1 & 2^2& 2^4 &\ldots & 2^{20} \\ \vdots & \vdots &
  \vdots &\vdots &\vdots \\ 1 & 2^{10}& 2^{20} & \ldots &
  2^{100} \end{pmatrix} =\begin{pmatrix} 1 & 1 &1 & \ldots &1 \\ 1 & 2
  & 4 & \ldots & 12 \\ 1 & 4& 14 &\ldots & 6 \\ \vdots & \vdots &
  \vdots &\vdots &\vdots \\ 1 & 12& 6 & \ldots & 2 \end{pmatrix}$.

  
Now $11$ is a
  Germain prime with safe prime $23=11\ti 2 +1$ and 
 the Fourier matrix $F_{11}$ over $\Z_{23}$ exists. 
  Chebotar\"ev property.

Denote the rows of the Fourier matrix by $\{e_0,e_1,\ldots, e_{10}\}$. 

Then 
\begin{enumerate}
\item $G[z]= e_0+e_1z+ \ldots + e_{10}z^{10}$ is a noncatastrophic matrix
      for a $(11,1)$ code which has the mds free distance $11^2=121$. There
      are $11!$ similar such codes obtained by permuting the order of
      $0,1,\ldots,10$. 
\item Let $E_0= \begin{pmatrix}e_0 \\ e_1 \\ e_2 \\ e_3 \\ e_4
		\end{pmatrix}, E_1=\begin{pmatrix}e_5 \\ e_6 \\ e_7 \\
				    e_8 \\ e_9 \end{pmatrix}$.

Define $G[z]=E_0+E_1z$. Then $G[z]$ is a noncatastrophic generator
      matrix for a $(11,5, 5)$ convolutional code which has free
      distance $14$. The maximum distance for such a $(11,5,5)$
      convolutional code is $(n-r)(\de/r+1)+\de +1 = (6)(2)+5+1=18$. An
      inputted word of memory $\geq 1$ has distance at least $16$. An 
      inputted word of memory $\geq 2$ has distance at least $18$.
\item Define  $E_0= \begin{pmatrix}e_0 \\ e_1 \\ e_2 \end{pmatrix},
    E_1 = \begin{pmatrix} e_3 \\ e_4 \\ e_5
		\end{pmatrix}, E_2=\begin{pmatrix}e_6 \\ e_7 \\
				    e_8 \end{pmatrix}$.
Define $G[z]= E_0+E_1z+E_2z^2$. Then $G[z]$ is a noncatastrophic
generator matrix for a $(11,3, 6)$ convolutional code. Its free distance
is $27$. The maximum free distance for such an $(11,3,6)$ code is
$(n-r)(\de/r+1)+\de+1 = 8(6/3+1)+6+1=31$.
Now $11!$ such codes may be constructed by permuting the order of
$0,1,2,\ldots, 10$.
 \item Define $E_i=\begin{pmatrix} e_i \\ e_{i+1} \end{pmatrix}$ for
       $i=0,1,2,3,4 $. Define $G[z]=
       E_0+E_1z+E_2z^2+E_3z^3+E_4z^4$. Then $G[z]$ is a noncatastrophic
       generator matrix for a $(11,2,8)$ code and it has free distance
       $50$. The maximum distance for such a $(11,2,8)$ code is
       $(9)(8/2)+8+1=54$. If the inputted word has memory $\geq 1$ then
       the distance is $52$. 
\end{enumerate}







Let $G[z] = E_0+E_1z+E_2z^2$. The free distance of the code generated by
$G[z]$  is $27$. 
Note that a non-zero linear combination of any three of $\{e_0,
\ldots, e_{10}\}$ has distance at least $9$, a non-zero linear combination of
any $6$ of these has distance at least $6$ and a non-zero linear
combination of $9$ of these has distance at least $3$ by Corollary
\ref{distance}. 


\subsection{Series of  examples}\label{examples}
The following are examples of the types of series of convolutional
codes 
that can be constructed using Fourier or Vandermonde matrices. 
\begin{enumerate}
\item Length $3$: Series of MDS $(3,1,2;2, 9)$ convolutional
  codes. Series of $(3,2,2;1,4)$ codes in which codewords for which
  the information vector has  support $\geq 2$ have distance $\geq 5$; 
the GSB here is $5$. 
The
  fields here can be $GF(2^2), GF(2^4)$ or $GF(7)=\Z_7$ 
see \cite{hurley}.
\item Length $5$:  \begin{enumerate} 
\item Series of $(5,2,2;1,8)$ codes. The GSB for such codes is
  $9$. The distance of a codeword which has information vector  of support
  $\geq 2$ is $\geq 10$.  
\item Series of $(5,2,4;2,\geq 12)$ codes. The GSB for such codes is
  $14$. \item Series of $(5,3,3;1, \geq 6)$ codes. The GSB for such
  codes is $8$; a codeword from an information vector  of support $\geq 2$
  has distance $\geq 8$. \item 
Series of $(5,1,4;4,25)$ codes which attain the GSB. \end{enumerate}
The fields here can be $GF(11)=\Z_{11}$ or $GF(7^4)$,
  see \cite{hurley}.
\item Length $7$:  \begin{enumerate} 
\item Series of $(7,3,3;1,\geq 10)$ codes. The GSB for such codes is
      $12$. An inputted word of support $\geq t$ has corresponding
      codeword of distance $\geq 10 + 2(t-1)$. 
\item Series of $(7,2,4;2, 18)$ codes. The GSB for such codes is $20$.
\item Series of $(7,2, 6;3,24)$ codes. The GSB for such codes is $27$.
\item Series of $(7,1,6;6,49)$ codes. The GBS is attained but  
  the rate is not good. 
\end{enumerate}
The fields here can be $GF(3^6)$ or $GF(5^6)$, see
  \cite{hurley}.
\item Length $11$: \begin{enumerate} \item 
Series of $(11,5,5;1,\geq 14)$ codes; the GSB for
      such codes is $18$. Information vectors  of support $\geq t$ have
		    distance $\geq 14 + 2(t-1)$.
\item Series of 
$(11,3,6;2,\geq 29)$ codes. Series of $(11,4,4;1,16)$ codes; the GSB
      here is $19$. \item Series of $(11,4,8;2,24)$ codes; the GSB for such
      codes is $30$.  \item Series of
      $(11,5,10;2,\geq 20)$ codes; the GSB for such is $29$. \end{enumerate} 
\item \vdots
 General $n$ with memory $1$: 
Suppose the Fourier matrix $F_n$ exists over $K$. 
      
Let $k= \floor{n/2}$. Then series of
  $(n,k,k;1,\geq n+3)$ codes are constructed. An inputted word of
      support $\geq t$ has codeword of distance $n+3 +2(t-1)$. 
The GSB for such $(n,k,k;1)$ codes is
  $3\lceil{n/2}\rceil$. The rate is approximately a half. 
\item Memory 1 codes of rate approximately  
1/2 with good free distances may be formed 
from the rows $\{e_0, e_1, \ldots, e_{n-1}\}$ of a matrix.
Set $r= \floor{\frac{n}{2}}$. Let $E_0$ be an $r\ti n$ matrix formed
using  $r$
rows of $\{e_0, e_1, \ldots e_{n-1}\}$ and let $E_1$ be an $r\ti n$
matrix formed using  $r$ other different rows of $\{e_0,e_1, \ldots,
e_{n-1}\}$. 
Form $G[z]= E_0+E_1z$. Then $G[z]$ is
the generator 
matrix of a noncatastrophic convolutional code of type $(n,r)$. Its rank
 and its degree $\de$ and memory $\mu$ are  $r$. The GSB  of such a code is $(n-r)(\de/r+1)+
\de + 1 = (n-r)2+r+1=2n-2r+r+1=2n-r+1 = 2(n-r+1)+r-1$. It is easily
shown that the code formed has free distance at least $2(n-r+1)$. 

\item  Large examples in modular arithmetic: Use for example Germain
	primes; these are primes $p$ such that $2p+1$ is also a
        prime. 
        For example let $K=GF(227)= \Z_{227}$ and
       $n=113$. Here $113$ is a Germain prime and $2*113+1=227$ is the 
corresponding safe prime. Then explicit
         series of $(113, 56, 56;1, \geq 116)$ codes over $\Z_{227}=GF(227)$
         and others may be designed. 
 
\end{enumerate}  

\section{Convolutional Extension to Hamming}
Here a convolutional unit memory code is constructed which mimics the
Hamming linear code and  can correct one error at each vector 
component of an error vector polynomial.

A generator matrix for the Hamming $(7,4,3)$ code over $\Z_2$ obtained from a circulant matrix is 

$K= \begin{pmatrix} 1 &1 & 0 & 1 & 0 &0 &0 \\ 0&1&1&0&1&0&0 \\ 0&0&1&1&0&1&0 \\ 0&0&0&1&1&0&1 \end{pmatrix} = \begin{pmatrix} e_0 \\ e_1\\ e_2\\e_3 \end{pmatrix} $

where the $e_i$ denote the rows of $K$. This can be extended to a unit matrix 

$L=  \begin{pmatrix} 1 &1 & 0 & 1 & 0 &0 &0 \\ 0&1&1&0&1&0&0 \\ 0&0&1&1&0&1&0 \\ 0&0&0&1&1&0 &1 \\ 1&1&1&0&1&0&0 \\ 0&1&1&1&0&1&0 \\ 0&0&1&1&1&0&1\end{pmatrix} = \begin{pmatrix} e_0 \\ e_1\\ e_2\\ e_3 \\ e_4 \\ e_5 \\ e_6\end{pmatrix} $.

Then $M= \begin{pmatrix} 0&1&0&0&1&0&0 \\ 0&0&1&0&0&1&0 \\ 0&0&0&1&0&0&1 \\ 1&1&1&0&1&1&0 \\ 0&1&1&1&0&1&1 \\ 1&1&0&1&1&1&1 \\ 1&0&0&0&1&0&1 \end{pmatrix} = (f_0,f_1,f_2,f_3,f_4,f_5,f_6) $

is the inverse of $L$ where the $f_i$ are the columns of $M$. 

Now define 
$G[z] = \begin{pmatrix} e_0\\ e_1\\ e_2\\ e_3 \end{pmatrix} + \begin{pmatrix} e_4 \\ e_5\\ e_6 \\ \un{0}\end{pmatrix}z $. 

Then $G[z]$ defines a unit memory convolutional $(7,4,3;1)$ convolutional code. The maximum free distance for these parameters is, by \cite{ros},  $(7-4)1+3+1 = 7$.  

It is easy to verify  that 
$H[z] = (f_4,f_5,f_5) - (f_0,f_1,f_2)z$ is a control matrix. (Note -1 = +1 as we are working over $\Z_2$ here.) A right inverse for $G[z]$ is $(f_0,f_1,f_2,f_3)$.

Now $R[z]G[z]= (\un{\al}_0 + \un{\al}_1z+ \ldots ) G[z]$ is of (free) distance $\geq 6$ unless  $R[z]= \un{\al}_0 = (0,0,0,*)$ in which case the distance is $3$. 

However it can correct any error vector in which there is at most $1$ error at each component. For suppose the error (unknown) vector is $(\un{\al}_0 + \un{\al}_1z+ \ldots )$ . Then 
$(\un{\al}_0 + \un{\al}_1z+ \ldots )((f_4,f_5,f_6) + (f_0,f_1,f_2)z) $ is known.
Looking at the coefficient of $z^0$ means that $\un{\al}_0(f_4,f_5,f_6)
$  is known so a single non-zero entry of  $\un{\al}_0$ may be
determined  as $(f_4,f_5,f_6)$ is the check matrix of a $(7,4,3)$ linear
code. When $\un{\al}_0$ is known, looking at the coefficient of $z$ will
determine any one non-zero entry, at most,  of $\un{\al}_1$.  Then
continuing in this way one non-zero entry  occurring at a component may
be determined.  

The decoding can all be done algebraically. 

We can do better by working over $GF(2^3)$. The order of a primitive
element $\om$ in $GF(2^3)$ is $7$ and thus the Fourier $7\ti 7$ matrix over $GF(2^3)$
exists. 
 Let this  Fourier matrix be denoted by 
$F_7 = \begin{pmatrix} e_0 \\ e_1 \\ \vdots \\ e_6 \end{pmatrix}$ with
rows $e_i$ and let its inverse be denoted by $K= (f_0,f_1, \ldots, f_6)$
with columns $f_i$.  

As noted in \cite{hurleyunit} any $r$ rows of $F_7$ taken in order or in arithmetic sequence generate an mds $(7,r,7-r+1)$ linear code. Consider then 

$G[z] = \begin{pmatrix} e_0 \\ e_1 \\ e_2 \\e_3 \\ e_4 \end{pmatrix} + \begin{pmatrix} e_5 \\ e_6 \\ \un{0} \\ \un{0} \\ \un{0} \end{pmatrix} z $.

Now $G[z]$ generates an $(7,5,2;1)$ convolutional code. The control matrix of the code is $(f_5,f_6) - (f_0,f_1) z$. The right inverse of $G[z]$ is
$(f_0,f_1,f_2,f_3,f_4)$; this can be used to return the original
information vector once the errors have been corrected. Binary
arithmetic can be use for encoding over $GF(2^k)$.  

By \cite{ros} the maximal distance of a convolutional code with parameters $(n,r,\de) $ is $(n-r)(\floor{\frac{\de}{r}}+1) + \de + 1$ which in this case is $ (7-5)(1)+ 2+1 =5$. Now we show that the free distance of code generated by $G[z]$ is actually $5$ but also that it can correct up to one error at each component in  an error vector.

Let $G[z] = G_0 + G_1 z$. Then $\un{\al} (G_0 + G_1z) = \un{\al}G_0 + \un{\al}G_1$. Now $\un{\al}G_0$ has distance $\geq 3$ as $G_0$ generates a $(7,5,3)$ linear code. Also $\un{\al}G_1$ has distance $\geq 6$ unless $\un{\al}$ is of form 
 $(0,0, *,*,*)$  in which case $\un{\al}G_0$ has distance $\geq 5$ as $\begin{pmatrix} e_2 \\ e_3 \\ e_4 \end{pmatrix}$ generates a $(7,3,5)$ linear code. Similarly it may be shown that the distance of any $(\un{\al}_0 + \un{\al}_1z + \un{\al}_2z^2 + \ldots ) (G_0+G_1z)$ is $\geq 5$; in fact the distance increases with the (polynomial) degree of the information vector. 

 It can correct any error vector in which there is at most $1$ error at each component. For suppose the error (unknown) vector is $(\un{\al}_0 + \un{\al}_1z+ \ldots )$ . Then 
$(\un{\al}_0 + \un{\al}_1z+ \ldots )((f_5,f_6) + (f_0,f_1)z)$ is known.
 Looking at the coefficient of $z^0$ gives that $\un{\al}_0(f_5,f_5) $
 is known. Now if  $\un{\al}_0$ has just one non-zero entry it may be
 determined precisely as $(f_5,f_6)$ is the check matrix of a $(7,5,3)$
 linear code. In fact the decoding method here is easy as  $\un{\al}_0$
 has one non-zero entry if and only if it a a multiple of a uniquely
 defined row of $(f_5,f_6)$. When $\un{\al}_0$ is sorted, looking at the
 coefficient of $z$ determines $\al_1$ as long as it has just one
 non-zero entry and so on.

\section{Block constructions}\label{blocks} The methods of Section
\ref{rows}
which use  {\em rows} of unit schemes  to 
construct convolutional codes are now  generalised by  using {\em blocks} of
 unit schemes. This  leads in particular to the algebraic construction
of {\em  LDPC convolutional
codes} and {\em self-dual} and {\em dual-containing} convolutional
codes.


Some of the cases here overlap some of those in \cite{jessica}.  

We begin with an illustrative example. Let $U,V$ be
$2n\ti 2n$ matrices with $UV=1$ and $U,V$ have block representations as
follows: $U= \begin{pmatrix} A \\ B \end{pmatrix}, V= (C,D)$ where
$A,B$ are block $n\ti 2n$ matrices and $C,D$ are block $2n\ti n$
matrices.
Then $UV=1$ implies $  \begin{pmatrix} A \\ B \end{pmatrix}(C,D)
= \begin{pmatrix} AC & AD \\ BC & BD \end{pmatrix} = I_{2n\ti 2n}$ and
  hence

\noindent $AC=I_{n\ti n}, AD = 0_{n\ti n}, BC= I_{n\ti n},
BD=0_{n\ti 2n}$.

Define $G[z] = A + Bz$; this is a $n\ti 2n$ matrix. Then

$G[z]C=I_{n\ti n}, G[z](D-Cz)= 0_{n\ti n}$

Hence $G[z]$ is of rank $n$ with left inverse $C$ and so is a
noncatastrophic generator matrix  for  a
$(2n,n, n;1)$ convolutional code $\mathcal{C}$. Also the check/control
matrix is $(D-Cz)$ 
which is also of rank  $n$ as may be shown by producing a right
inverse for it. Now if $V$ is a low density matrix we have
constructed a low density convolutional code; see section
\ref{ldpc} below for relevant definitions of {\em low density}. Let
$d(W)$ denote the distance of a linear code generated by $W$.
\begin{lemma} $ d_{free}\mathcal{C} \geq d(A)
  +d(B)$. 
\end{lemma}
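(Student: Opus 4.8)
The plan is to bound the Hamming weight of an arbitrary non-zero codeword from below by isolating two coefficient positions whose weights are forced to be at least $d(A)$ and $d(B)$ respectively; this is the block analogue of the argument already used in Lemma \ref{deg1}. First I would take a non-zero input $u(z)\in F[z]^n$ and write $u(z)=\sum_i\al_iz^i$ with each $\al_i\in F^n$, setting $s=\min\{i:\al_i\neq 0\}$ and $t=\max\{i:\al_i\neq 0\}$, so that $\al_s\neq 0$, $\al_t\neq 0$ and $s\leq t$. The corresponding codeword is
\begin{equation*}
u(z)G[z]=\Bigl(\sum_i\al_iz^i\Bigr)(A+Bz)=\sum_i\al_iA\,z^i+\sum_i\al_iB\,z^{i+1}.
\end{equation*}

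Next I would read off the two extremal coefficients. The coefficient of $z^{s}$ receives a contribution $\al_sA$ from the first sum and $\al_{s-1}B$ from the second; since $\al_{s-1}=0$ by minimality of $s$, this coefficient equals $\al_sA$. Likewise the coefficient of $z^{t+1}$ equals $\al_tB$, because the first sum would contribute $\al_{t+1}A=0$. Since $s\leq t<t+1$, these two coefficients sit at genuinely distinct powers of $z$, so the total weight of the codeword is at least $\mathrm{wt}(\al_sA)+\mathrm{wt}(\al_tB)$, independently of whatever happens at the intermediate degrees $s+1,\dots,t$.

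It then remains to bound each term. From $AC=I_{n\times n}$ and $BC=I_{n\times n}$ both $A$ and $B$ have rank $n$, so $\al_sA\neq 0$ and $\al_tB\neq 0$; being non-zero codewords of the linear codes generated by $A$ and $B$, they have weight at least $d(A)$ and $d(B)$ respectively. Hence $\mathrm{wt}(u(z)G[z])\geq d(A)+d(B)$, and since every non-zero codeword of $\mathcal{C}$ arises from a non-zero input, taking the minimum gives $d_{free}\mathcal{C}\geq d(A)+d(B)$.

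The only point needing care — which I would flag as the main (though mild) obstacle — is the clean separation of the bottom and top coefficients: one must verify there is no overlap or cancellation mixing the $A$-part and the $B$-part at the two extremal degrees. This is precisely where the single-step shift in $G[z]=A+Bz$ does the work, forcing the lowest coefficient to be a pure $A$-combination and the highest a pure $B$-combination. Everything else is routine, and in particular no distance information about the middle coefficients is needed.
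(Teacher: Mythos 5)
Your argument is correct and is exactly the bottom-and-top-coefficient argument the paper itself uses for the row version in Lemma \ref{deg1}; the paper states this block lemma without proof, and your proof (lowest coefficient is $\al_sA$, highest is $\al_tB$, both non-zero by full row rank, so the weights add) is the intended one. One tiny note: the identity $BC=I_{n\times n}$ you quote is a typo carried over from the paper --- the block identities from $UV=I_{2n\times 2n}$ are $AC=I$, $AD=0$, $BC=0$, $BD=I$ --- but $BD=I$ still gives $B$ rank $n$, so your conclusion stands unchanged.
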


\subsection{General block method}\label{gblock} 
The general block method is as follows. It is similar in principle 
to the row method 
of Section \ref{rows} but has certain constructions  in mind. 

Suppose that 
$UV=1$ in $F_{sn\ti sn}$ and that $U,V$ have block structures 
$U= \begin{pmatrix} A_1 \\ A_2 \\ \vdots \\ A_s \end{pmatrix}$ and 
$V = (B_1,B_2, \ldots, B_s)$ where the $A_i$ are $n\ti sn$ matrices
and the $B_j$ are $sn \ti n$ matrices. Then $A_iB_j =
\delta_{ij}I_n$. Choose 
any $r$ of these blocks of  $U$ to form an $rn\ti sn$ matrix $E$ which
then has rank $rn$; any order on the blocks may be chosen with which to
construct the $E$. There are $\binom{s}{r}$ ways of choosing $r$ blocks. 
Suppose $\{E_0,E_1,\ldots, E_s\}$ are $s$ such $rn\ti sn$ matrices. Define 
\begin{equation}\label{blocks1} 
G[z]= E_0+E_1z+\ldots + E_sz^s.  
\end{equation}

Consider $G[z]$ to be the generating matrix for a convolutional
code. 

 Say that $A\in E$ if $A$ occurs as a block in forming
 $E$ and $A\not \in E$ if $A$ does not occur as a block in $E$.
 
 The following may be proved in a similar manner to Proposition
 \ref{jerome} in Section \ref{rows}.

\begin{proposition}\label{jerome1} Suppose  $G[z]=E_0+E_1z+\ldots + E_sz^r$ and 
that $A_j\in E_0$ implies
 $A_j\not \in E_i$ for $i\neq 0$. Then $G[z]$ is a noncatastrophic
 generator matrix for a convolutional $(n,r)$ code.  
\end{proposition}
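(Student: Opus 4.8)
The plan is to mimic the proof of Proposition \ref{jerome} verbatim, replacing the row-level identity $e_i f_j = \delta_{ij}$ with its block-level analogue. The key structural fact, already established in the setup of Section \ref{gblock}, is that when $UV = 1$ with the block decompositions $U = (A_1, \ldots, A_s)^{\T}$ and $V = (B_1, \ldots, B_s)$, one has $A_i B_j = \delta_{ij} I_n$. This is precisely the block version of the Kronecker-delta relation for rows, so the entire argument of Proposition \ref{jerome} carries through with $n\ti n$ identity blocks in place of scalar $1$'s.

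First I would introduce the block transpose/dual notation: for a matrix $E_0$ formed from blocks $A_{j_1}, \ldots, A_{j_r}$ of $U$, define $E_0^*$ to be the $sn \ti rn$ matrix obtained by stacking the corresponding columns $B_{j_1}, \ldots, B_{j_r}$ of $V$ (the block analogue of the $E^*$ construction for rows). The orthogonality relation $A_i B_j = \delta_{ij} I_n$ then gives, for each $i$, the block identity $E_i E_0^* = \delta_{i0} I_{rn}$, exactly because the hypothesis $A_j \in E_0 \implies A_j \not\in E_i$ for $i \neq 0$ guarantees that no block appearing in $E_0$ reappears in any $E_i$ with $i \geq 1$, so all cross terms vanish and only the diagonal $I_n$ blocks survive.

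The conclusion follows by the same one-line computation as before: multiplying $G[z] = E_0 + E_1 z + \ldots + E_s z^s$ on the right by the constant matrix $E_0^*$ yields $G[z] E_0^* = \sum_i E_i E_0^* z^i = I_{rn}$, since every term with $i \geq 1$ contributes $0$. Thus $E_0^*$ is a polynomial (indeed constant) right inverse of $G[z]$, so by the criterion recalled from \cite{mac} in Section \ref{rows}, $G[z]$ is noncatastrophic and has full rank $rn$, making it a generator matrix for the convolutional code.

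I do not anticipate a genuine obstacle here, since the paper explicitly signals that the proof runs parallel to Proposition \ref{jerome}; the only point requiring care is bookkeeping of sizes. One must verify that $E_0^*$ has the right dimensions ($sn \ti rn$) so that $G[z] E_0^*$ is the $rn \ti rn$ identity, and confirm that the block orthogonality $A_i B_j = \delta_{ij} I_n$ is indeed what the setup provides rather than a scalar relation. The code parameters stated in the proposition, namely rate $rn/sn$, should be recorded consistently with the row case; strictly speaking the code constructed has length $sn$ and dimension $rn$, so the "$(n,r)$" in the statement is the block-count shorthand inherited from Proposition \ref{jerome}.
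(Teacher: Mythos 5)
Your proposal is correct and follows essentially the same route the paper intends: the paper gives no separate proof for this proposition, stating only that it "may be proved in a similar manner to Proposition \ref{jerome}", and your argument is exactly that adaptation, replacing $e_i f_j = \delta_{ij}$ with $A_i B_j = \delta_{ij} I_n$ and exhibiting the constant right inverse $E_0^*$ so that $G[z]E_0^* = I_{rn}$. Your closing remarks on dimensions and on the $(n,r)$ block-count shorthand are accurate and require no further work.
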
  





We illustrate the method for a unit with $3$ blocks. 
Suppose $UV=1$ are $3n\ti 3n$ matrices with $U= \begin{pmatrix} A \\ B \\ C
\end{pmatrix}, V= (D,E,F)$ where $A,B,C$ are $n \ti 3n $ matrices and
$D,E,F$ are $3n\ti n$ matrices. This then gives:

$AD=I_{n\ti n}, AE=0_{n\ti n}, AF=0_{n\ti n}$

$BD= 0_{n\ti n}, BE= I_{n\ti n}, BF = 0_{n\ti n}$

$CD= 0_{n\ti n}, CE= 0_{n\ti n}, CF = I_{n\ti n}$

Define $G[z] = A + Bz+Cz^2$. Then $G[z]D=I_{n\ti n},
G[z](E-Dz)=0$. Thus $G[z]$ is a noncatastrophic generator matrix
 a $(3n,n, 2n;2)$ convolutional code $\mathcal{D}$ with check matrix $(E-Dz)$. 
Let $d(Y,X)$ denotes the distance of the linear
 code with 
  generator matrix $\begin{pmatrix} Y\\ X \end{pmatrix}$.
\begin{proposition} $d_{free}\mathcal{D} \geq \min\{d(A)+ d(A,B) + d(B,C) +
  d(C), d(A)+d(B)+d(C)\}$  .
\end{proposition}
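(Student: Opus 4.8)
The plan is to bound the Hamming weight of an arbitrary nonzero codeword $u(z)G[z]$ from below, splitting the argument according to the support of the inputted word. Writing $u(z) = \alpha_0 + \alpha_1 z + \cdots + \alpha_m z^m$ with $\alpha_i \in F^n$, I would first normalise so that $\alpha_0 \neq 0$ is the lowest nonzero coefficient and $\alpha_m \neq 0$ the top one; multiplying $u(z)$ by a power of $z$ only shifts the codeword and leaves its weight unchanged, so this is harmless. Expanding $u(z)G[z] = \sum_i \alpha_i z^i(A + Bz + Cz^2)$ and collecting powers of $z$, the coefficient of $z^k$ is $\alpha_k A + \alpha_{k-1}B + \alpha_{k-2}C$, with the convention $\alpha_j = 0$ for $j<0$ or $j>m$. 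Since the weight of the codeword is the sum over $k$ of the Hamming weights of these coefficient vectors, it suffices to identify enough positions at which the coefficient is certified to be a nonzero codeword of one of the linear generator matrices $A$, $B$, $C$, $\begin{pmatrix} A \\ B \end{pmatrix}$, $\begin{pmatrix} B \\ C \end{pmatrix}$.

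The enabling observation is that, because $U$ is invertible, its rows are linearly independent, so each of $A$, $B$, $C$ has full row rank $n$ and each of $\begin{pmatrix} A \\ B \end{pmatrix}$, $\begin{pmatrix} B \\ C \end{pmatrix}$ has full row rank $2n$. Hence a product $\beta W$ with $W$ one of these blocks vanishes only when $\beta = 0$, and when nonzero it is a codeword of the code generated by $W$, of weight at least the corresponding value $d(\cdot)$ or $d(\cdot,\cdot)$.

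For the support-one case $m=0$ the codeword is $\alpha_0 A + \alpha_0 B z + \alpha_0 C z^2$; since $\alpha_0 \neq 0$ the three coefficients are nonzero codewords of $A$, $B$, $C$ respectively, sitting at the distinct exponents $0,1,2$, so the weight is at least $d(A)+d(B)+d(C)$. For support at least two, that is $m \geq 1$, I would read off instead the four extreme coefficients: at $z^0$ the vector $\alpha_0 A$, nonzero as $\alpha_0\neq 0$, of weight $\geq d(A)$; at $z^1$ the vector $(\alpha_1,\alpha_0)\begin{pmatrix} A \\ B \end{pmatrix}$, nonzero as $\alpha_0\neq 0$, of weight $\geq d(A,B)$; at $z^{m+1}$ the vector $(\alpha_m,\alpha_{m-1})\begin{pmatrix} B \\ C \end{pmatrix}$, nonzero as $\alpha_m\neq 0$, of weight $\geq d(B,C)$; and at $z^{m+2}$ the vector $\alpha_m C$, nonzero, of weight $\geq d(C)$. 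The decisive point is that for $m \geq 1$ the four exponents $0,\,1,\,m+1,\,m+2$ are pairwise distinct, so these contributions add, giving weight at least $d(A)+d(A,B)+d(B,C)+d(C)$. Taking the minimum of the two cases over all nonzero inputs yields the claimed bound.

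The main obstacle, and essentially the only delicate point, is the bookkeeping guaranteeing that the four flagged exponents do not collide; this is exactly where the hypothesis $m \geq 1$ (support $\geq 2$) enters and is what forces the split into the two terms of the minimum. A secondary routine verification is that the stacked blocks of $U$ have full row rank, which is immediate from $UV=1$; once that is in place, each flagged coefficient is certified nonzero purely from $\alpha_0 \neq 0$ or $\alpha_m \neq 0$, and no further computation is required.
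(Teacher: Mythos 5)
Your proof is correct. The paper itself supplies no argument for this proposition---it is one of the distance proofs the author explicitly omits to save space---so there is nothing to compare line by line; but your method is precisely the one the paper demonstrates on the analogues it does prove (the lemma that $G[z]=A+Bz$ with $A,B$ mds has $d_{free}\geq 2(n-r+1)$, and the $(5,2,2;1,8)$ proposition), namely isolating coefficients of $u(z)G[z]$ that are certified nonzero from $\alpha_0\neq 0$ or $\alpha_m\neq 0$ alone and adding their individual weight bounds. The two points you flag as delicate are handled correctly: invertibility of $U$ gives full row rank of $A$, $B$, $C$ and of the stacked blocks $\begin{pmatrix} A\\ B\end{pmatrix}$ and $\begin{pmatrix} B\\ C\end{pmatrix}$, so each flagged coefficient is a nonzero codeword of the corresponding linear code; and for $m\geq 1$ the four exponents $0$, $1$, $m+1$, $m+2$ are pairwise distinct, while the $m=0$ case degenerates to the three coefficients $\alpha_0A$, $\alpha_0B$, $\alpha_0C$ and yields the second term of the minimum. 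Taking the minimum over the two cases gives exactly the stated bound, so the argument is complete.
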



Generate $(3n,2n)$ convolutional codes from the unit system as follows. Define

$G[z] = \begin{pmatrix} A \\ B \end{pmatrix} + \begin{pmatrix}
  B\\ C \end{pmatrix}z$.
\begin{proposition}

(i) $G[z]\begin{pmatrix}(D,E) - (0_{3n\ti 0},
  D)z)\end{pmatrix}  = I_{2n}$. \\ 

(ii) $G[z]\{F-Ez+Dz^2\}= 0_{2n\ti n}$.
\end{proposition}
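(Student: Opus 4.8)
The plan is to prove both identities by direct block multiplication, relying on the orthogonality relations that the hypothesis $UV=1$ forces, namely $AD=BE=CF=I_{n}$ together with $AE=AF=BD=BF=CD=CE=0_{n\ti n}$, all of which are already recorded above. First I would rewrite the generator in stacked form,
\[
G[z]=\begin{pmatrix} A \\ B \end{pmatrix}+\begin{pmatrix} B \\ C \end{pmatrix}z=\begin{pmatrix} A+Bz \\ B+Cz \end{pmatrix},
\]
a $2n\ti 3n$ matrix, so that every subsequent product reduces to multiplying the two horizontal $n\ti 3n$ strips $A+Bz$ and $B+Cz$ against the proposed inverse and annihilator.

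For part (i), note first that the proposed right inverse simplifies to $(D,E)-(0_{3n\ti n},D)z=(D,\,E-Dz)$, a $3n\ti 2n$ matrix (the leading zero block being the $3n\ti n$ zero matrix). Forming the $2n\ti 2n$ block product $G[z](D,\,E-Dz)$ I would compute its four $n\ti n$ blocks separately. The two diagonal blocks collapse to the identity, since $(A+Bz)D=AD+BDz=I_{n}$ and $(B+Cz)(E-Dz)=BE-BDz+CEz-CDz^{2}=I_{n}$. The two off-diagonal blocks vanish by pairwise cancellation: $(A+Bz)(E-Dz)=AE-ADz+BEz-BDz^{2}=-I_{n}z+I_{n}z=0$, and $(B+Cz)D=BD+CDz=0$. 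Hence $G[z](D,\,E-Dz)=I_{2n}$, which simultaneously exhibits a polynomial right inverse, so that $G[z]$ has rank $2n$ and is a noncatastrophic generator of a $(3n,2n)$ convolutional code.

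For part (ii), I would left-multiply the $3n\ti n$ matrix $F-Ez+Dz^{2}$ by $G[z]$ and verify that each of the two resulting $n\ti n$ blocks is zero. In the top block the only surviving monomials are $ADz^{2}$ and $-BEz^{2}$, which cancel because $AD=BE=I_{n}$; in the bottom block the survivors are $-BEz$ and $CFz$, cancelling because $BE=CF=I_{n}$, every other product being zero. Thus $G[z]\{F-Ez+Dz^{2}\}=0_{2n\ti n}$, identifying $F-Ez+Dz^{2}$ as a control matrix for the code. The only genuine obstacle here is bookkeeping: there is no conceptual difficulty, merely the need to track which of the nine block products survive and to confirm that the cross terms cancel in matching pairs. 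This pairwise cancellation is precisely the mechanism that makes the chosen inverse and annihilator work, and it mirrors at the block level the row-by-row cancellation already used in Lemma \ref{gen2}.
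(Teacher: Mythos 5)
Your proof is correct and is exactly the direct block-multiplication verification the paper intends (the paper omits the proof, having just listed the nine relations $A_iB_j=\delta_{ij}I_n$ that you use). You also correctly read the typo $0_{3n\ti 0}$ as the $3n\ti n$ zero block, and all the cancellations you identify check out.
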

\begin{corollary} $G[z]$ is a noncatastrophic generator matrix for a
  $(3n,2n,2n;1)$ convolutional code $\mathcal{K}$ with check matrix $H[z] =
  F-Ez+Dz^2$.
\end{corollary}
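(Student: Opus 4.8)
This statement is a corollary of the Proposition immediately preceding it, whose parts (i) and (ii) I may take as given. The plan is to harvest noncatastrophicity, the parameters, and the check matrix from those two identities in turn. First, part (i) exhibits a polynomial right inverse of $G[z]=\begin{pmatrix}A\\B\end{pmatrix}+\begin{pmatrix}B\\C\end{pmatrix}z$, namely $(D,E)-(0_{3n\ti n},D)z$, with product $I_{2n}$. By the criterion recalled in Section \ref{rows} (see \cite{mac}), the existence of a polynomial right inverse makes $G[z]$ noncatastrophic and forces it to have full row rank $2n$. The parameters then read off directly: $G[z]$ is $2n\ti 3n$, so the length is $3n$ and the rank is $r=2n$; it is linear in $z$, so the memory is $\mu=1$; and its degree is $\de=2n$, since the coefficient of $z$ is $\begin{pmatrix}B\\C\end{pmatrix}$, whose rows are rows of the invertible matrix $U$ and hence are all nonzero, so each of the $2n$ rows of $G[z]$ has external degree exactly $1$. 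This yields the type $(3n,2n,2n;1)$.

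Second, for the check matrix I would use part (ii), which gives $G[z]H[z]=0_{2n\ti n}$ with $H[z]=F-Ez+Dz^2$, so that $\text{im}\,G[z]\subseteq\ker H[z]$ (here $\ker H[z]=\{v:vH[z]=0\}$). To pin down that $H[z]$ is genuinely a control matrix, I would first show $H[z]$ has rank $n$ by producing an explicit polynomial left inverse. The natural candidate is the block $C$ itself: using the unit relations $CF=I_n$, $CE=0_{n\ti n}$, $CD=0_{n\ti n}$ one gets $C\,H[z]=CF-CEz+CDz^2=I_n$, so $C$ is a left inverse and $H[z]$ has rank $n$.

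The remaining point — and the only step that is not a one-line computation — is upgrading the containment $\text{im}\,G[z]\subseteq\ker H[z]$ to equality. The relation $CH[z]=I_n$ shows the $F[z]$-linear map $v\mapsto vH[z]$ from $F[z]^{3n}$ to $F[z]^{n}$ is split surjective (the assignment $u\mapsto uC$ is a section), so $\ker H[z]$ is a direct summand of $F[z]^{3n}$ of rank $3n-n=2n$. Since $G[z]$ is right-invertible, $\text{im}\,G[z]$ is likewise a direct summand of rank $2n$, and it lies inside $\ker H[z]$. Two direct summands of equal rank, one contained in the other, must coincide: writing $F[z]^{3n}=\text{im}\,G[z]\oplus M$ and applying the modular law gives $\ker H[z]=\text{im}\,G[z]\oplus(\ker H[z]\cap M)$, where the complementary piece is a rank-zero submodule of the free module $M$, hence is $0$. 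Therefore $\mathcal{K}=\text{im}\,G[z]=\ker H[z]$, so $H[z]=F-Ez+Dz^2$ is a check matrix, completing the proof. I expect this last module-theoretic bookkeeping to be the main obstacle; the two matrix identities and the parameter count are routine.
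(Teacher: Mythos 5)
Your proposal is correct and follows the same route the paper intends: the corollary is harvested directly from parts (i) and (ii) of the preceding proposition, with (i) supplying the polynomial right inverse that gives noncatastrophicity and rank $2n$, and (ii) supplying the annihilating matrix $H[z]=F-Ez+Dz^2$. The paper itself offers no further argument, so your two additions --- the left inverse $CH[z]=I_n$ establishing that $H[z]$ has rank $n$ (the analogue of the paper's remark in the $2n$-block example that the rank ``may be shown by producing a right inverse''), and the direct-summand/modular-law argument upgrading $\mathrm{im}\,G[z]\subseteq\ker H[z]$ to equality --- are exactly the details the paper leaves implicit, and both are carried out correctly.
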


See below for details on LDPC codes but it's worth mentioning the
following at this stage. 
\begin{corollary} $\mathcal{K}$ is an LDPC convolutional code when $V$
  is a low density matrix. If further $V$ has no short cycles then neither does
  the $\mathcal{K}$.
\end{corollary}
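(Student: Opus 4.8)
The plan is to read both assertions off the explicit control matrix $H[z]=F-Ez+Dz^2$ of $\mathcal{K}$ furnished by the preceding corollary, using that its three coefficient matrices $F,\,-E,\,D$ are exactly the three column blocks of $V=(D,E,F)$. For the first assertion I would observe that every nonzero scalar coefficient occurring in $H[z]$ is a nonzero entry of one of $F,E,D$, and that these blocks partition the columns of $V$; consequently the weight of row $i$ of $H[z]$ equals the weight of row $i$ of $V$, while the weight of column $j$ of $H[z]$ is at most the combined weight of columns $j$, $n+j$ and $2n+j$ of $V$. Thus if $V$ is low density in the sense of Section~\ref{ldpc}, so is $H[z]$, and since $H[z]$ is a genuine control matrix of $\mathcal{K}$ the code is LDPC.

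For the cycle assertion I would pass to the Tanner graph $\mathcal{T}$ attached to $H[z]$, with a variable node for each row $i$ at each time $t$, a check node for each column $j$ at each time, and an edge from the variable $(i,t)$ to the check $(j,t+k)$ exactly when the coefficient of $z^k$ in $H_{ij}[z]$ is nonzero ($k\in\{0,1,2\}$). The bookkeeping map sending this edge to the entry of $V$ in row $i$ and column $(2-k)n+j$ is a bijection onto the nonzero entries of $V$, because the blocks $D,E,F$ occupy disjoint column ranges, so the exponent $k$ records which block is used and the residue records the check $j$. The key structural fact is that any cycle of $\mathcal{T}$ closes up in time, which forces the signed $z$-exponents around it to sum to zero; I would use this to match a cycle of $\mathcal{T}$ with a closed walk of the same length in the Tanner graph of $V$, and then argue that when that walk uses distinct columns it is a genuine cycle of $V$. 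Contrapositively, if $V$ has no cycle shorter than the chosen threshold, then neither can $\mathcal{T}$.

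The main obstacle, and where I expect the real work to lie, is that distributing the three blocks over the offsets $\{0,1,2\}$ can manufacture short cycles in $\mathcal{T}$ that have no image in the Tanner graph of $V$. Precisely, if two of $V_{i,j},V_{i,n+j},V_{i,2n+j}$ are simultaneously nonzero then the single entry $H_{ij}[z]$ ceases to be a monomial, and a pair of such multi-term entries in one row already yields a four-cycle of $\mathcal{T}$ that collapses to a single row of $V$. Ruling this out requires the additional input that within each check group $\{j,n+j,2n+j\}$ the blocks $D,E,F$ have disjoint row supports, equivalently that each entry of $H[z]$ is a monomial; this is exactly the property one must build into the design of a low-density $V$. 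Granting it, the columns met by any cycle of $\mathcal{T}$ are distinct, the zero-exponent-sum condition converts the closed walk into a genuine cycle of the Tanner graph of $V$ of equal length, and the claim follows.
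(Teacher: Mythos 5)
The paper states this corollary without any proof, so there is no official argument to measure yours against; what follows is an assessment of your proposal on its own terms. Your treatment of the first assertion is correct and is surely the intended reasoning: the coefficient matrices of $H[z]=F-Ez+Dz^2$ are exactly the column blocks of $V=(D,E,F)$, so the weight of row $i$ of $H[z]$ equals that of row $i$ of $V$ and the weight of column $j$ of $H[z]$ is the combined weight of columns $j$, $n+j$, $2n+j$ of $V$; low density is therefore inherited by the check matrix.

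For the cycle assertion you have correctly located the real difficulty --- the three blocks are superimposed onto a single set of $n$ check positions at offsets $0,1,2$, and this identification can create cycles with no preimage in the Tanner graph of $V$ --- but the repair you propose does not close the gap. Requiring each $H_{ij}[z]$ to be a monomial only guarantees that the two variable (resp.\ check) nodes of a $4$-cycle in the time-expanded graph lie in distinct rows $i_1\neq i_2$ (resp.\ distinct check classes $j_1\neq j_2$); it does not force the two edges entering a given check class to come from the same column of $V$. Concretely, suppose $F_{i_1j_1}$, $F_{i_1j_2}$, $E_{i_2j_1}$, $E_{i_2j_2}$ are all nonzero, which is fully compatible with your disjoint-row-support condition. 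Then $(i_1,t)-(j_1,t)-(i_2,t-1)-(j_2,t)-(i_1,t)$ is a $4$-cycle of the expanded graph, while the four underlying entries of $V$ sit in the four pairwise distinct columns $2n+j_1$, $n+j_1$, $n+j_2$, $2n+j_2$, so they form no cycle in $V$ at all --- indeed rows $i_1$ and $i_2$ share no column among them. Your final step, that the zero-exponent-sum condition ``converts the closed walk into a genuine cycle of the Tanner graph of $V$ of equal length,'' therefore fails: the image need not even be a walk in $V$'s graph, because consecutive edges need not share a column. What actually controls the girth of the expanded graph is the collapsed $3n\times n$ matrix whose support is the union of the supports of $D$, $E$, $F$ (together with the vanishing of the alternating offset sums around its cycles), and ``$V$ has no short cycles'' does not imply the corresponding statement for this collapsed matrix. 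As stated, the second assertion of the corollary needs either that stronger hypothesis or the additional structure of the group-ring constructions of \cite{hurley33} that the paper evidently has in mind.
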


\begin{proposition} $d_{free}\mathcal{K} \geq d(A,B) + d(B,C)$.
\end{proposition}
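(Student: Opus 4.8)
The plan is to mimic the argument of Lemma~\ref{deg1}, replacing the single mds matrices there by the block generator matrices appearing in $G[z]$. Write $P=\begin{pmatrix} A \\ B \end{pmatrix}$ and $Q=\begin{pmatrix} B \\ C \end{pmatrix}$, so that $G[z]=P+Qz$. By definition $d(A,B)$ and $d(B,C)$ are the minimum distances of the linear codes generated by $P$ and $Q$ respectively. First I would record that both $P$ and $Q$ have full row rank $2n$: their rows are rows of the invertible matrix $U=\begin{pmatrix} A \\ B \\ C \end{pmatrix}$ and hence are linearly independent. Consequently the linear encodings $\beta\mapsto \beta P$ and $\beta\mapsto \beta Q$ (for $\beta\in F^{2n}$) are injective, so a nonzero input vector produces a nonzero codeword of weight at least $d(A,B)$, respectively $d(B,C)$.

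Next, take any nonzero codeword $u(z)G[z]$ and expand $u(z)=\sum_{i=0}^{t}\alpha_i z^i$ with $\alpha_i\in F^{2n}$; after multiplying by a power of $z$ (which does not change the free distance) I may assume $\alpha_0\neq 0$, and I take $\alpha_t\neq 0$ to be the top component. A direct multiplication gives
\begin{equation*}
u(z)G[z]=\alpha_0 P+\sum_{j=1}^{t}(\alpha_j P+\alpha_{j-1}Q)\,z^j+\alpha_t Q\,z^{t+1}.
\end{equation*}
Thus the coefficient of $z^0$ is $\alpha_0 P$ and the coefficient of $z^{t+1}$ is $\alpha_t Q$. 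Since $\alpha_0\neq 0$ and $\alpha_t\neq 0$, the injectivity just noted gives $\mathrm{wt}(\alpha_0 P)\geq d(A,B)$ and $\mathrm{wt}(\alpha_t Q)\geq d(B,C)$.

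The crucial observation is that these two contributions appear as distinct coefficients, namely those of $z^0$ and of $z^{t+1}$, which differ because $t\geq 0$ forces $t+1\geq 1$. Writing the codeword as $\sum_j c_j z^j$ with $c_j\in F^{3n}$, its weight equals $\sum_j \mathrm{wt}(c_j)$, which therefore contains both $\mathrm{wt}(\alpha_0 P)$ and $\mathrm{wt}(\alpha_t Q)$ as separate summands. Hence
\begin{equation*}
\mathrm{wt}\bigl(u(z)G[z]\bigr)\ \geq\ \mathrm{wt}(\alpha_0 P)+\mathrm{wt}(\alpha_t Q)\ \geq\ d(A,B)+d(B,C),
\end{equation*}
and taking the minimum over all nonzero codewords yields $d_{free}\mathcal{K}\geq d(A,B)+d(B,C)$. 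The only point requiring care---and the step I expect to be the main obstacle---is guaranteeing $\alpha_0 P\neq 0$ and $\alpha_t Q\neq 0$, i.e.\ the full-rank/injectivity of $P$ and $Q$; once that is in hand the degree bookkeeping is routine and identical in spirit to Lemma~\ref{deg1}.
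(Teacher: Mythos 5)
Your proof is correct: the full row rank of $P$ and $Q$ (their rows being rows of the invertible $U$) guarantees $\alpha_0 P\neq 0$ and $\alpha_t Q\neq 0$, and these occupy the distinct coefficients of $z^0$ and $z^{t+1}$, so their weights add. The paper omits the proof of this proposition, but your argument is exactly the block analogue of the paper's proof of Lemma~\ref{deg1}, which is evidently the intended route.
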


Use $G[z] = \begin{pmatrix} A \\ B \end{pmatrix}+ \begin{pmatrix} B\\ C
						  \end{pmatrix}z +
						  \begin{pmatrix} C \\ B
						  \end{pmatrix}z^2$
to construct a $(3n,2n,4n;2)$ convolutional code.

\section{LDPC convolutional codes}\label{ldpc}

Suppose now $UV=1$ where $V$ is of {\em low density}. Say $V$ has low
density if and only if $V$ has a small number of elements, compared to
its size, in each row and column. \footnote{For our purposes it is only
necessary to ensure that $V$ has a small number of elements, compared
to its size, in certain columns as the control matrix of the 
convolutional codes are constructed from the columns of $V$.} 
Then convolutional codes constructed
by the 
row method of Section \ref{rows} or the block method of Section \ref{blocks}
must necessarily be LDPC (low density parity check) convolutional
codes. By ensuring that $V$ has no short cycles, which can be done by
methods of \cite{hurley33}, the LDPC convolutional codes constructed
will have no short cycles in their control/check matrices. 
It is known that LDPC codes with no short
cycles in their control/check matrices perform well.  

The paper \cite{hurley33} gives methods for  constructing 
classes of  matrices $U,V$ of arbitrary size over various fields with
$UV=1$ where
$V$ is of low density.  Such matrices 
may for example be obtained, \cite{hurley33},  from group ring elements
$u,v$ with
$uv=1$ in which the support of $v$ as a group ring element 
is small; corresponding matrices
may be obtained by mapping the group ring into a ring of matrices as
per \cite{hur3}. Matrices of arbitrary size and over many fields
including  $GF(2)=\Z_2$ satisfying the conditions may be obtained in
this manner. It may also be ensured in the construction 
 that $V$ has no short cycles thus
ensuring the codes obtained have no short cycles in their control 
matrices.  

Since low density implies the length must comparatively be long, to
actually write out examples explicitly is more difficult
in a research paper  but as shown in \cite{hurley33} many such 
 constructions may be formulated. 

In  
\cite{hurley33} examples are given    
 to  construct industry standards LDPC linear codes. Here 
such an  example is modified to produce  LDPC convolutional codes. 

Consider $\Z_2(C_{204}\ti C_4)$ where $C_{204}$ is generated by $g$ and
$C_4$ is generated by $h$. Set  

 $ v=g^{204-75}+ h(g^{204-13} + g^{204-111} + g^{204-168})
+h^2(g^{204-29} + g^{204-34}+g^{204-170}) + h^3(g^{204-27} +
g^{204-180})$.

The support of $v$ is $9$ and is a low density group ring element. As
shown in \cite{hurley33} $v$ has no short cycles.  Its 
inverse $u$ may be easily found but has large
support and so is not written out. The matrices corresponding to $u,v$
are denoted 
$U,V$ (see \cite{hur3}), which in this case are circulant-by-circulant,
and have the forms
$U=\begin{pmatrix} A_0 \\ A_1\\ A_2\\ A_3\end{pmatrix}, V= ( B_0,
B_1,B_2, B_3)$ where $A_i$ are $204\ti 816$ 
matrices and $B_j$ are  (low density) $816\ti 204$ matrices with
$A_iB_j=\delta_{ij}I_{204\ti 204}$. These blocks may now 
be used to construct LDPC convolutional codes of various types. 

For example $G[z] = A_0 + A_1z + A_2z^2+A_3z^3$ is a noncatastrophic 
 generator matrix for an
LDPC convolutional $(816, 204, 612;3)$   
with no short cycles,  and  control (low density) matrix 
$H[z] = (B_1,B_2,B_3)+ (B_0,\un{0},\un{0})z +
(\un{0}, B_0, \un{0})z^2 + (\un{0},\un{0}, B_0)z^3$ where $\un{0}$ is
the zero $816\ti 204$ matrix. The right inverse of $G[z]$ is  easily
deduced. 

Defining 
 $G[z] =\begin{pmatrix} A_0\\ A_1 \\ A_2 \end{pmatrix} + \begin{pmatrix}
  A_1\\ A_2 \\ A_3 \end{pmatrix}z$ gives an LDPC $(816,612, 612;1)$ code
with control low density matrix $H[z] = B_3+B_2z+B_1z^2+B_0z^3$. The
rate here is $3/4$. 

Defining $G[z] = \begin{pmatrix}
  A_0\\ A_1 \end{pmatrix} + \begin{pmatrix} A_1 \\ A_2 \end{pmatrix}z + 
\begin{pmatrix}
 A_2 \\ A_3 \end{pmatrix}z^2$ gives an LDPC $(816,408, 816;2)$
convolutional code
with control low density matrix $H[z] =
(B_2,B_3)+(B_1,\un{0})z+(\un{0},B_1)z^2+(\un{0},B_0)z^3$. The 
rate  is $1/2$. 

Permutations of $\{0,1,2,3\}$ in $U,V$ may be used 
  for further constructions.  
 
The group ring constructions of \cite{hurley33} allow the construction
of many series of these LDPC convolutional codes. 
\section{Self-dual and dual-containing convolutional codes}\label{selfdual}
It is known that self-dual and dual-containing codes leads to the
construction of quantum codes, see \cite{calderbank} and also
\cite{grassl}. 

\subsection{Self-dual convolutional codes}\label{selfdual1}
A convolutional code with generator matrix $G[z]$ is said to
a self-dual code if its dual `transposed' is equal to itself; we
interpret the transpose of a matrix in the normal way but in addition
interpret the `transpose' of $z$ to be $z^{-1}$.\footnote{This
  interpretation makes great
sense when considering group rings.} 
We say then a convolutional code with
generator matrix $G[z]$ is {\em self-dual} if $G[z]G[z^{-1}]\T=0$. 
 Note that if $H[z]$ is a control
matrix then so is $H[z]z^i$ for any $i$. A self-dual code must
necessarily be a $(2n,n)$ code for some $n$. 

Consider  $UV=1$ with  $UV=  \begin{pmatrix} A \\ B \end{pmatrix}(C,D)
= \begin{pmatrix} AC & AD \\ BC & BD \end{pmatrix} = I_{2n\ti
  2n}$ where  $A,B,C,D$ are blocks of size $n$. Then $AC = I_n, AD=
0_{n\ti n}, BC=0_{n \ti n}, BD=I_n$. 
Hence  $G[z]= A+Bz$ has control  matrix $D-Cz$ as $G[z](D-Cz)=0_{n\ti n}$. 
Suppose now $U$ is an orthogonal matrix so that $C=A\T,
D=B\T$. Then $D-Cz = B\T-A\T z$. Suppose also  the field has
characteristic $2$ 
and in this case
the control matrix is $B\T+A\T z$. Now $G[z^-{1}]\T z =B\T+ A\T z $  
and hence
the code is self-dual.
\begin{lemma} Suppose $U=\begin{pmatrix} A \\ B \end{pmatrix}$ is an
  orthogonal matrix in characteristic $2$. Then the convolutional code
  with generator matrix $G[z]= A+Bz$ is a self-dual code.
\end{lemma}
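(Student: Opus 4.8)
The plan is to verify the defining identity $G[z]G[z^{-1}]\T = 0_{n\ti n}$ directly, using orthogonality to pin down the columns of $V$ and then reusing the check matrix already computed in the block construction just above the lemma. First I would exploit the hypothesis that $U$ is orthogonal: this forces $V = U^{-1} = U\T$, so writing $U = \begin{pmatrix} A \\ B \end{pmatrix}$ gives $V = (C,D) = (A\T, B\T)$, that is $C = A\T$ and $D = B\T$. This is exactly the identification $C=A\T,\ D=B\T$ recorded in the preceding paragraph, and it is the only place the orthogonality hypothesis enters.

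Next I would recall the block calculation for the two-block case: for $G[z] = A + Bz$ the identity $G[z](D - Cz) = 0_{n\ti n}$ holds, so $D - Cz$ is a check matrix for the code $\mathcal{C}$. Substituting the orthogonality relations turns this check matrix into $B\T - A\T z$, which in characteristic $2$ equals $B\T + A\T z$ since $-1 = 1$.

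The key step is then to identify this check matrix, up to the unit factor $z$, with $G[z^{-1}]\T$. Indeed $G[z^{-1}]\T = (A + Bz^{-1})\T = A\T + B\T z^{-1}$, while $z^{-1}(D - Cz) = B\T z^{-1} - A\T = A\T + B\T z^{-1}$ in characteristic $2$. Hence $G[z^{-1}]\T = z^{-1}(D - Cz)$, and multiplying the relation $G[z](D-Cz) = 0_{n\ti n}$ by the unit $z^{-1}$ yields $G[z]G[z^{-1}]\T = 0_{n\ti n}$, which is precisely the self-dual condition in the definition.

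I expect the only real care to lie in the characteristic-$2$ sign handling and in confirming that $G[z^{-1}]\T$ genuinely functions as a control matrix rather than merely annihilating $G[z]$. Since $D - Cz$ was already shown to have rank $n$, and multiplication by the unit $z^{-1}$ preserves rank over the Laurent polynomials, $G[z^{-1}]\T$ has rank $n$ as well; thus it is a full check matrix and the code coincides with its dual, so $\mathcal{C}$ is self-dual as claimed.
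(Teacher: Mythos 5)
Your proposal is correct and follows essentially the same route as the paper: the paper likewise starts from the check matrix $D-Cz$ of the two-block construction, uses orthogonality to substitute $C=A\T$, $D=B\T$, multiplies by $z^{-1}$, and invokes characteristic $2$ to identify the result with $G[z^{-1}]\T$, concluding self-duality. Your added remarks on the rank of the check matrix only make explicit what the paper leaves implicit.
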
 

Methods are developed in \cite{hurley44} in which to construct
orthogonal matrices of arbitrary sizes 
over various fields
including fields of characteristic $2$. The methods involves
constructing such elements in a group ring and then finding the
corresponding matrices as per the embedding \cite{hur3} of the group ring into
a ring of matrices.  

Here is an example. Consider  $\Z_2C_4$ where $C_4$ is the cyclic group
generated by $a$. Let $u= a+a^2+a^3$. Then $u^2=1$, and $u\T=u$. Thus
taking the corresponding matrix, \cite{hur3},  
 
$U=\begin{pmatrix} 0&1&1&1 \\ 1&0&1&1 \\ \hline 1&1&0&1
\\ 1&1&1&0 \end{pmatrix} = \begin{pmatrix}A \\ B\end{pmatrix}$.

we have  $UU\T=1=I_{4\ti 4}$.

Then $G[z]=A + Bz$ is the generator matrix of a self-dual
convolutional $(4,2,2;1)$ code. It is easy to check that the free
distance of the code is $4$; both $A,B$ are generator matrices for 
$(4,2,2)$ linear codes. 




The next example follows methods developed in \cite{hurley44}.
Consider $\Z_2D_8$ where $D_8 =\langle a,b | a^4=1=b^2, ba=a^{-1}b\rangle$ is
the dihedral group of order $8$. Let $u = 1+ b+ ba$. Then $u^2=1,
u\T=u$. The matrix
of $u$ is 

$U=\begin{pmatrix} 1 & 0& 0&0 & 1&1&0&0 \\ 0&1&0&0&1&0&0&1 \\ 0&0&1&0&
    0&0&1&1 \\ 0&0&0&1 &0&1&1&0 \\ \hline 1&1&0&0 &1&0&0&0 \\ 1&0&0&1&0&1&0&0\\
    0&0&1&1 & 0&0&1&0 \\ 0&1&1&0&0&0&0&1\end{pmatrix} = \begin{pmatrix}
							 A \\ B
							\end{pmatrix}$

and $UU\T=1$.

Then $G[z] = A+Bz$ is the generator matrix for a self-dual convolutional
$(8,4,4;1)$. Its free distance is $6$; each of $A,B$ generates a
$(8,4,3)$ linear code. 

Further examples may be generated similar to methods in \cite{hurley44}
by finding group ring elements with $u^2=1, u\T=u$ and then going over
to the matrix representation as per \cite{hur3}. Using for example
$\Z_2D_{16}$ will lead to a $(16,8,8;1,10)$ self-dual convolutional
code. 

\subsection{Dual-containing convolutional systems}\label{dualcontain} 
A dual-containing code is one whose dual is contained within the
code. A self-dual code is a dual-containing code of rate $1/2$. It is
interesting to construct rate $1/2, 3/4, 7/8$ etc.\ dual-containing
codes as quantum convolutional codes may be constructed from these. 
  
Say $G[z]$ is the generator matrix  for a
dual containing  convolutional code $\C$ if the code generated by
$H[z^{-1}]\T$, where $H[z]$ is a control matrix for $\C$, is contained
in $\C$.

Dual-containing convolutional codes may be constructed as follows. 

Let $U$ be an orthogonal $4n\ti 4n$ 
matrix $UU^*=1$ with $U=\begin{pmatrix} A \\ B\\ C
\\ D\end{pmatrix}$ and $U\T= (A\T,B\T,C\T,D\T)$ for blocks $A,B,C,D$ of
size $n\ti 4n$. 
Define 

$G[z]= \begin{pmatrix} A \\ B \\ C \end{pmatrix} + \begin{pmatrix} B
  \\ C \\ D \end{pmatrix} z$. 

Then for $K[z]= (A\T,B\T,C\T) - (\un{0},A\T,B\T)z +
(\un{0},\un{0},A\T)z^2$ it is seen that $G[z]K[z]= I_{3n}$ and so 
$G[z]$ is a noncatastrophic
generator matrix for a convolutional $(4n,3n,3n;1)$ code. 

Now $G[z]\{ D\T-C\T z + B\T z^2-A\T z^3\} = 0_{3n\ti n}$ as may easily be
  verified. Let $H[z^{-1}] = A\T - B\T z^{-1}+C\T z^{-2}-D\T z^{-3}$. Then
  $H[z^{-1}]z^{3} = D\T -C\T z+B\T z^2-A\T z^3$. Thus $H[z^{-1}]$ is a
  control matrix for the code. To show that the  code with generator
  $G[z]$ is dual containing it is necessary to show that the code
  generated by $H[z]\T $ is contained in  the code $\mathcal{C}$
  generated by  $G[z]$. 

Suppose the  characteristic of the field is $2$. Then  
 $H[z]\T = \{(I_n, \un{0},\un{0}) + (\un{0},\un{0}, I_n)z^2\}G[z]$ 
and hence code generated by $H[z]\T$ is contained in  
$\mathcal{C}$. Thus $\mathcal{C}$ is dual-containing. The rank of the
 code is $3n$ and the rank of the dual is $n$. 

Other dual-containing codes may be obtained by permuting 
$\{A,B,C,D\}$ in $G[z]$. 

Here is a specific example. Consider $u=a+a^4+a^7$ in $\Z_2C_8$ where
$C_8$ is generated by $a$. Then $u^4=1, u^2\neq 1, u^T=u$. Thus $u^2=
a^2+1+ a^6$
is orthogonal matrix and is symmetric. The matrix
of $u^2$ has the form $U=\begin{pmatrix} A \\ B \\ C \\D\end{pmatrix}$
and $U$ is 
orthogonal and symmetric. Then $G[z]= \begin{pmatrix} A \\ B
  \\ C \end{pmatrix} + \begin{pmatrix} B  \\ C \\ D \end{pmatrix} z$
determines  a dual-containing $(8,6,6;1)$ convolutional code. 

Here \\ $A= \begin{pmatrix} 1&0&1&0&0&0&1&0
  \\ 0&1&0&1&0&0&0&1\end{pmatrix}$, 
$B = \begin{pmatrix} 1&0&1&0&1&0&0&0
  \\ 0&1&0&1&0&1&0&0\end{pmatrix}$,\\
$C= \begin{pmatrix} 0&0&1&0&1&0&1&0
  \\ 0&0&0&1&0&1&0&1\end{pmatrix}$,  
$D=\begin{pmatrix} 1&0&0&0&1&0&1&0
  \\ 0&1&0&0&0&1&0&1\end{pmatrix}$.

The free distance is $4$.
\quad 

We may also construct self-dual codes from this set-up. Again 
 $U$ is an orthogonal $4n\ti 4n$  
matrix, $UU^*=1$ with $U=\begin{pmatrix} A \\ B\\ C
\\ D\end{pmatrix}$ and $U\T= (A\T,B\T,C\T,D\T)$ for blocks $A,B,C,D$ of
size $n\ti 4n$.
Define $G[z]=\begin{pmatrix}A \\B \end{pmatrix}+\begin{pmatrix}C
\\ D\end{pmatrix}z$. Then $G[z]\{(C\T,D\T) - (A\T,B\T)z\} = 0_{2n\ti
  2n}$ and $G[z]$ is a generator matrix for a convolutional code which
is self-dual in characteristic $2$. By permuting the $\{A,B,C,D\}$ other
different self-dual convolutional codes may be obtained. Note that
$\{A,B,C,D\}$ must be different as $U$ is invertible and  
this gives $4!$ self-dual convolutional codes.  The distances of the
codes depend on the distances of the linear codes generated by
$\{A,B,C,D\}$.  

Here are examples of dual-containing code in characteristic $3$.
Suppose $U$ with rows $\{e_0,e_1, e_2\}$ is an orthogonal matrix. 
Then $UV=I$ where $V$ has columns $\{e_0\T,e_1\T,e_2\T\}$.
Consider $G[z] = \begin{pmatrix} e_0 \\ e_1 \end{pmatrix}+
\begin{pmatrix}e_1 \\ e_2 \end{pmatrix}z $.
Then $G[z]\{e_2\T- e_1\T+e_0\T z^2\}=0_{2\ti 1}$ and thus $H[z]= e_0-e_1z+e_2z^2$
satisfies $G[z]H[z]\T=0_{2\ti 1}$.
Now $((1,0)+ (0,1)z)G[z]= e_0+e_1z+e_1z+e_2z^2=e_0-e_1z+e_2z^2$ in
characteristic $3$. Thus the code generated by $H[z]$ is contained in
the code generated by $G[z]$ and hence  the code generated by $G[z]$ is
dual-containing. The code is a $(3,2)$ code and the dual is a $(3,1)$
code.

Larger rate dual-containing codes may also be constructed. Here we
indicate how dual-containing convolutional codes 
of rate $7/8$ can be constructed. The process may
be continued for higher rates. 
 Let $U$ be an orthogonal $8n\ti 8n$ 
matrix $UU\T=1$ with $U=\begin{pmatrix} A_0 \\ A_1\\ \vdots \\ A_7
\end{pmatrix}$ and $U\T= (A_0\T,A_1\T,\ldots,A_7\T)$ for blocks $A_i$ of
size $n\ti 8n$. Define 

\begin{equation}\label{kris} G[z] =\begin{pmatrix} A_0 \\ A_1\\ \vdots \\ A_6
\end{pmatrix}+ \begin{pmatrix} A_1\\ A_2 \\ \vdots \\ A_7
\end{pmatrix}z \end{equation}

Then $G[z]\{A_7\T-A_6\T z + \ldots + A_0\T z^7\}= 0_{7n\ti n}$. It may
then be shown that $G[z]$ is a generator matrix for a convolutional
 $(8n,7n,7n;1)$ code which is dual-containing when the characteristic is $2$. 

Specifically For example consider $FC_{16}$, with $F$ of
characteristic $2$ and $C_{16}$ is generated by $a$.
Let $u=a+a^7+a^8+a^9+a^{15}$. Then $u^2=1, u\T=u$. The matrix $U$
corresponding to $u$ as per the isomorphism in \cite{hur3}  is
circulant and has the form $\begin{pmatrix} A_0
  \\ A_1\\ \vdots \\ A_7 \end{pmatrix}$ for $2\ti 16$ matrices $A_i$. 
The resulting $G[z]$ in equation \ref{kris} is a
  dual-containing convolutional $(16,14,14;1,4)$ code; the rate is
  $\frac{7}{8}$. By taking
pairs of the $A_i$ together and forming for example $G[z]=
\begin{pmatrix} A_0
  \\ A_1 \\ \hline A_2 \\  A_3 \\ \hline A_4\\ A_5\end{pmatrix}
+ \begin{pmatrix} A_2\\ A_3 \\ \hline A_4 \\ A_5\\ \hline
  A_6\\ A_7 \end{pmatrix}z$ 
give $(16,12,12;1)$ dual-containing convolutional codes.


\section{Conclusions}\label{attrib}
\begin{itemize} \item Methods are devised for constructing 
  and analysing series of convolutional codes 
      using rows or blocks of invertible
      schemes. These are relatively easy to
  describe and implement.  

\item The right inverse of a noncatastrophic generator matrix of such a code 
and the control matrix are functions of the columns or
blocks of the  inverse of the scheme; these  can then be calculated
directly.   
\item The structures of the control matrices in the forms given lead  
to efficient implementable error-correcting algebraic techniques. 

\item Different convolutional codes, of the same or different rates, 
may  be derived from a single unit scheme type. 
\item Series of good
  convolutional  codes may be designed by using
      unit schemes with special properties.  
\item Codes are constructed to a given rate and given error-capability
      at each component,  and  efficient algebraic decoding algorithms are described for these.
\item The general constructions are embracing and the  explicit
      constructions/examples given, as well as having their own 
intrinsic interest, are an 
      indication of further potential constructions from  the general schemes. 
      Existing constructions 
      occur as special cases of the general construction here. 

\item  Convolutional codes may be decoded by known techniques, such as
       Viterbi decoding or sequential decoding,  
\cite{blahut} Chapter 11, \cite{joh} or \cite{mceliece}, and  algebraic decoding algorithms are
       known to exist for some codes, see
       \cite{heine1}, \cite{rosenthal}. Here efficient algebraic decoding
       algorithms are developed. 

\item Self-dual and dual-containing convolutional codes may  
      be designed and analysed. 
 Dual-containing codes are important
      for the construction of quantum codes, \cite{calderbank}.
LDPC (low density parity
      check) convolutional codes may be  designed and analysed using
       unit schemes;  here the inverse in the unit scheme has `low density' the codes  may be designed so that the control matrices 
      have no short cycles. 
\end{itemize}

\end{document}